\documentclass[12pt,reqno]{amsart}
\usepackage{geometry}                
\geometry{letterpaper}                   
\usepackage{amsmath,amssymb}
\usepackage{graphicx}
\usepackage{amssymb,latexsym, amsmath}
\usepackage{amsfonts,amsthm}
\usepackage{amscd}
\usepackage{mathrsfs}
\usepackage{hyperref}
\usepackage{eucal}
\usepackage{epstopdf}
\usepackage{amsgen}
\usepackage{xspace}
\usepackage{verbatim}
\usepackage{stmaryrd}
\usepackage{enumitem}
\newlist{steps}{enumerate}{1}
\setlist[steps, 1]{label = Step \arabic*:}

\newcommand{\gd}{\Delta}
\newcommand{\de}{\delta}

\newcommand{\inpt}[1]{\langle #1 \rangle}

\newcommand{\gw}{\Omega}
\newcommand{\ap}{\alpha}
\newcommand{\ga}{\gamma}
\newcommand{\gb}{\beta}

\newcommand{\gl}{\lambda}

\newcommand{\ms}{\mathscr}

\newcommand{\nb}{\nabla}
\newcommand{\vp}{\varphi}
\newcommand{\ve}{\varepsilon}
\newcommand{\pdr}{\partial}

\newcommand{\csg}{\{ S(t)\}_{t\geq0}}

\newcommand{\beq}{\begin{equation}}
\newcommand{\eeq}{\end{equation}}
\newcommand{\bea}{\begin{align}}
\newcommand{\eea}{\end{align}}
\newcommand{\bthm}{\begin{theorem}}
\newcommand{\ethm}{\end{theorem}}
\newcommand{\bpr}{\begin{proof}}
\newcommand{\epr}{\end{proof}}
\newcommand{\bcl}{\begin{corollary}}
\newcommand{\ecl}{\end{corollary}}
\newcommand{\bpn}{\begin{proposition}}
\newcommand{\epn}{\end{proposition}}
\newcommand{\bre}{\begin{remark}}
\newcommand{\ere}{\end{remark}}
\newcommand{\bdf}{\begin{definition}}
\newcommand{\edf}{\end{definition}}
\newcommand{\bss}{\begin{align*}}
\newcommand{\ess}{\end{align*}}

\newcommand{\bl}{\label}

\newcommand{\dist}{\operatorname{dist}}

\newtheorem{theorem}{Theorem}[section]
\newtheorem{corollary}[theorem]{Corollary}

\newtheorem{lemma}[theorem]{Lemma}
\newtheorem{proposition}[theorem]{Proposition}

\theoremstyle{definition}
\newtheorem{definition}[theorem]{Definition}
\theoremstyle{remark}
\newtheorem{remark}{Remark}

\numberwithin{equation}{section}

\title{Diffusive Hindmarsh-Rose Equations with Memristors}

\begin{document}

\title [Diffusive Hindmarsh-Rose Equations with Memristors]{Global Dynamics of Diffusive Hindmarsh-Rose Equations with Memristors}

\author[Y. YOU]{YUNCHENG YOU} $^{\dag}$
\address{Professor Emeritus, University of South Florida, Tampa, FL 33620, USA}
\email{you@mail.usf.edu}

\thanks{}


\subjclass[2010]{35B40, 35B41, 35K55, 35Q92, 92C20} 

\date{August 21, 2022}


\keywords{Diffusive Hindmarsh-Rose equations, memristor, global attractor, dissipative dynamics, asymptotic compactness, neuron dynamics}

\begin{abstract}
Global dynamics of the diffusive Hindmarsh-Rose equations with memristors as a new proposed model for neuron dynamics are investigated in this paper. We prove the existence and regularity of a global attractor for the solution semiflow through uniform analytic estimates showing the higher-order dissipative property and the asymptotically compact characteristics of the solutions by the approach of Kolmogorov-Riesz theorem. The quantitative bounds of the regions containing this global attractor respectively in the state space and in the regular space are explicitly expressed by the model parameters.
\end{abstract}

\maketitle

\section{\textbf{Introduction}}

Starting from the well-known Hodgkin-Huxley equations \cite{HH} (1952), which provided a highly nonlinear four-dimensional model for general neuron dynamics, and the two-dimensional FitzHugh-Nagumo equations \cite{FH} (1961-1962) as a simplified model which explains periodic firing with refractory but not able to generate chaotic neuron burstings, scientists have proposed various types of mathematical neuron models based on the biological characteristics of neuron functions and the biophysical laws. Two key issues in any modeling of neuron dynamics and neuronal networks are the firing-bursting patterns of single neurons and the collective behaviors of neural networks, especially synchronization and chaotic dynamics. All these issues are closely linked to significant applications in many areas such as brain diseases, image and signal processing, encryption of communications, and mostly artificial neural networks and artificial intelligence. 

The Hindmarsh-Rose equations \cite{HR} (1984) is originally a three-dimensional ODE model for neuron firing-bursting phenomena and has been studied through bifurcation analysis and numerical simulations by many researchers, cf. \cite{CK, CS, ET, HR, IG, Ng} and the references therein. This model exhibits rich and interesting spatial-temporal bursting patterns \cite{BRS, IG, SC, SPH}. In particular, the three-dimensional complex bifurcations lead to numerically observed  and sophisticated chaotic bursting behaviors. 

Very recently the author's group studied global dynamics generated by the spatially diffusive Hindmarsh-Rose equations \cite{PYS1, PY, PYS2}, random dynamics of the stochastic Hindmarsh-Rose equations \cite{CY}, and synchronization of complex Hindmarsh-Rose neural networks and FitzHugh-Nagumo neural networks \cite{CPY, PSY}.

In this work, we propose and study global dynamics of the diffusive Hindmarsh-Rose equations with memristors, which is a new mathematical model for neuron dynamics in terms of a hybrid system of PDE and ODE featuring an additional component equation for memristors and its nonlinear coupling to the membrane potential equation for a neuron cell. 

The concept memristor (meaning a memory resistor) was coined by Leon Chua \cite{Chua} (1971), as an electrical device with two terminals, which denotes the relationship between time-varying electromagnetic flux and electric charges. General memristive system was initially tackled in \cite{ChuaK} (1976) and has attracted broad scientific interests since the seminal paper \cite{SS} (2008) published in Nature.

The memristors are recognized and used in advanced neuron models to describe the electromagnetic induction effect caused by ions movement across the neuron cell membrane, which has been observed through fluctuations of extracellular calcium and potassium ions' concentrations in experiments \cite{QW, US2}. Moreover, the memristor synapsis in a model carries and transmits dynamically memorized information, which serves as a different type of synapses in neuron networks beside the electrical synapses and chemical synapses well known in neuroscience \cite{ET, US1, Wu}. 

The research results on memristive Hindmarsh-Rose neuron models and FitzHugh-Nagumo neuron models have been rapidly expanding in the recent decade, cf. \cite{Ay} - \cite{BRS}, \cite{DR, EE, Han}, \cite{QW} - \cite{RM}, \cite{SWZ, US1, US2}, \cite{Wu} - \cite{YP} and many references therein. Memristive neuron networks and artificial neural networks currently become an active topic as it shows rich collective dynamical behaviors and chaotic bursting patterns \cite{Han, US1} by varying the coupling strengths and other parameters, such as the reported coexisting chimeras and local attractors \cite{BB, Han, RM, SR} and clusters \cite{BB, Wang}, and synchronization enhanced by memristive couplings  \cite{EE, Guan, HY, RJ, SR, US2, VK, XJ}.   

In this paper, we consider the following new model of the diffusive Hindmarsh-Rose equations with memristor for a single neuron:
\begin{align}
    \frac{\pdr u}{\pdr t} & = \eta \gd u + a u^2 - b u^3 + v - w + J_e - k_1 \vp (\rho) u, \, \bl{ueq} \\
    \frac{\pdr v}{\pdr t} & = \ap - \gb u^2 - v, \bl{veq} \\
    \frac{\pdr w}{\pdr t} & = q (u - u_e) - rw, \bl{weq} \\
    \frac{\pdr \rho}{\pdr t} & = u - k_2 \rho, \bl{peq}
\end{align}
for $t > 0,\; x \in \gw \subset \mathbb{R}^{n}$ ($n \leq 3$), where $\gw$ is a bounded domain up to three dimension with locally Lipschitz continuous boundary (put in a general mathematical scope). The nonlinear term in a quadratic form
\beq \bl{pp}
	\vp (\rho) = c + \ga \rho + \de \rho^2, \quad c, \ga \in \mathbb{R}, \;\; \delta > 0,
\eeq 
presents the memristive coupling in the equation of membrane potential \eqref{ueq}, where the memristive variable $\rho (t, x)$ stands for the memductance of the memristor and $\vp (\rho)$ represents the electromagnetic induction flux with its coupling strength $k_1$ and self-coupling strength $k_2$ respectively. All the results proved in this paper are also valid for another type of memristor \cite{Wang}, $\vp (\rho) = \tanh (\rho)$, simply by adjusting the estimates in proof. 

In this system \eqref{ueq}-\eqref{peq}, the variable $u(t, x)$ refers to the membrane electric potential of a neuron cell, the variable $v(t, x)$ represents the transport rate of the ions of sodium and potassium through the fast channels and can be called the spiking variable, while the variables $w(t, x)$ represents the transport rate across the neuron membrane through slow channels of calcium and other ions correlated to the inter-spike quiescence and can be called the bursting variable. 

All the involved parameters $a, b, \eta, \ap, \gb, q, r,  \de, k_1, k_2$ and the external input $J_e$ can be any positive constants, while the reference value of the membrane potential $u_e \in \mathbb{R}$ and the first two parameters in \eqref{pp} $c, \ga \in \mathbb{R}$ can be any real number constants. For instance, a set of typical parameter values can be \cite{RM, SWZ, SPH, US1}
\begin{gather*}
	 J_e = 3.2, \;\; r = 0.002, \; \; q = 0.008,  \;\; u_e = -1.6,  \\[2pt]
	 a = 3.0, \;\; b = 1, \;\; \ap = 1.0, \;\; \gb = 5.0, \\[2pt]
	 \gamma = 0.4, \;\; \delta = 0.8, \;\; k_1 = 0.9, \;\; k_2 = 6.5.
\end{gather*}
We impose the homogeneous Neumann boundary conditions for the $u$-component,
\begin{equation} \label{nbc}
    \frac{\pdr u}{\pdr \nu} \, (t, x) = 0, \quad  t > 0,  \; x \in \partial \gw ,
\end{equation}
and the initial conditions of the components are denoted by
\begin{equation} \bl{inc}
   u_0 (x) = u(0, x), \;  v_0 (x) = v(0, x), \; w_0 (x) = w(0, x),  \; \rho_0 (x) = \rho (0, x), \quad x \in \gw.
\end{equation}

In the listed and many other references, the methodology of investigations into the memristive Hindmarsh-Rose neuron models mainly consists of bifurcation and stability analysis supported with numerical simulations to imitate neuron bursting-firing patterns. Several commonly used methods in this area are bifurcation diagrams and Lyapunov exponents \cite{Ay, BB, EE, RM, SR, US2}, generalized Hamiltonian functions and Lyapunov functions \cite{Ay, US1, US2, XQM}, center manifold theory \cite{Ay, RJ, Wang, XQM}, dissipativity analysis \cite{Wang}, algebraic invariant manifold for analytic solutions \cite{Ay}, etc.

Notably the proposed memristive neuron model of diffusive Hindmarsh-Rose equations in this paper reflects the structural features of a neuron cell that has the short-branch dendrites receiving incoming signals and the long-branch axon (naturally viewed as a one-dimensional space) propagating outreaching signals, which justifies the diffusive partial differential equation of the membrane potential in \eqref{ueq}. 
 
We shall present in Section 2 the formulation of the system \eqref{ueq}-\eqref{peq} and the preliminaries. In Section 3 we shall conduct uniform estimates to show the absorbing property of this solution semiflow. In Section 4 and Section 5  we shall prove the higher-order dissipativity  and the asymptotic compactness of the solution semiflow by means of the Kolmogorov-Riesz theorem. Finally in Section 6, the main result on the existence and regularity of a global attractor, which characterizes the collection of all permanent regimes of the modeled neuron dynamics, will be proved and the quantitative bounds of the regions containing this global attractor respectively in the state space and in the regular space are explicitly expressed by the model parameters.

\section{\textbf{Formulation and Preliminaries}}

For the diffusive Hindmarsh-Rosse equations with memristor \eqref{ueq} - \eqref{peq} proposed in this paper, we define the state space to be $E = [L^2 (\gw)]^4 = L^2 (\gw, \mathbb{R}^4)$, which is a Hilbert space and can be roughly called the \emph{energy space}. Also define the \emph{mild space} $\Pi = H^1 (\gw) \times L^2 (\gw, \mathbb{R}^3)$ and the \emph{regular space} $\Gamma = H^2 (\gw) \times L^\infty (\gw, \mathbb{R}^3)$, where $H^1 (\gw)$ and $H^2 (\gw)$ are the Sobolev spaces. \,The norm and inner-product of the Hilbert space $L^2 (\gw)$ or $E$ will be denoted by $\| \, \cdot \, \|$ and $\inpt{\,\cdot , \cdot\,}$, respectively. The norm of Banach space $L^p (\gw)$ will be dented by $\| \cdot \|_{L^p}$ if $p \neq 2$. We shall use $| \cdot |$ to denote either a vector norm or a set measure in a Euclidean space.

The initial-boundary value problem \eqref{ueq}--\eqref{inc} is usually formulated as an initial value problem of an evolutionary equation:
\begin{equation} \label{pb}
 	\begin{split}
   	& \frac{\partial g}{\partial t} = A g + f(g), \quad  t > 0, \\[2pt]
    	g(0) &= g_0 = \textup{col}\, (u_0, v_0, w_0, \rho_0) \, \in E.
	\end{split}
\end{equation}
Here the vector function $g(t, x) = \textup{col} \,(u(t, x), v(t, x), w(t, x), \rho (t, x))$, the nonpositive self-adjoint operator
\begin{equation} \label{opAh}
        A =
        \begin{pmatrix}
            \eta \gd  & 0   & 0 & 0   \\[3pt]
            0 & - I   & 0  & 0  \\[3pt]
            0 & 0 & - r I  & 0  \\[3pt]
            0 & 0 & 0 & - k_2 I
        \end{pmatrix}
        : \mathcal{D} (A) \rightarrow E,
\end{equation}
whose domain $\mathcal{D} (A) = \{g \in H^2(\gw) \times L^2 (\gw, \mathbb{R}^3): \pdr g /\pdr \nu = 0\; \textup{on the boundary} \, \pdr \gw\}$, is the generator of an analytic $C_0$-semigroup $\{e^{At}\}_{t \geq 0}$ on the Hilbert space $E$, cf. \cite{SY}. 
The nonlinear mapping in the equation \eqref{pb},
\begin{equation} \label{opfh}
        f(g) = 
        \begin{pmatrix}
             au^2 - bu^3 + v - w + J_e - k_1 \vp (\rho)u  \\[4pt]
             \ap - \gb u^2  \\[4pt]
	     q (u - u_e)  \\[4pt]
	     u
        \end{pmatrix}
        : \Pi \longrightarrow E.
\end{equation}
is locally Lipschitz continuous because of the continuous mapping $H^{1}(\gw) \hookrightarrow L^6(\gw)$ for spatial domain with $\dim (\gw) = n \leq 3$.

Below we may simply write the column vector $g(t)$ as $(u(t, \cdot), v(t, \cdot ), w(t, \cdot), \rho (t, x))$ and write $g_0 = (u_0, v_0, w_0, \rho_0)$. We shall consider the weak solution \cite{CV, SY} of this initial value problem \eqref{pb} defined below, as the basic setting.

\begin{definition} \label{Dwksn}
	A function $g(t, x), (t, x) \in [0, \tau] \times \gw$, is called a \emph{weak solution} to the initial value problem \eqref{pb}, if the following conditions are satisfied:
	
	\textup{(i)} $\frac{d}{dt}\, (g, \, \xi) = ( Ag, \xi ) + ( f(g), \,\xi )$ for almost every $t \in [0, \tau]$ and any $\xi \in C_0^\infty (\gw, \mathbb{R}^4)$;
	
	\textup{(ii)} $g(t, \cdot)  \in  C ([0, \tau]; E) \cap L^2 ([0, \tau], \Pi)$ such that $g(0) = g_0$.
	
\noindent	
Here the differential equation is satisfied in the distribution sense.	
\end{definition}

\begin{lemma} \label{Lwn}
	For any given initial state $g_0 \in E$, there exists a unique weak solution $g(t; g_0) = (u(t), v(t), w(t), \rho (t)), \, t \in [0, T)$, for some $T > 0$, of the initial value problem \eqref{pb}, which satisfies
\begin{equation} \label{soln}
    	g \in C([0, T); E) \cap C^1 ((0, T); E) \cap L_{loc}^2 ([0, T), \Pi).
\end{equation}
Any weak solution $g(t; g_0)$ becomes a strong solution for $t > 0$, which satisfies
\beq \bl{stsl}
	g \in C([t_0, T); \Pi) \cap C^1 ((t_0, T); \Pi)
\eeq
for any\, $t_0 \in (0, T)$. All the weak solutions have the continuously dependence property on the initial data in the state space $E$.
\end{lemma}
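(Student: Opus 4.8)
The plan is to read \eqref{pb} as an abstract semilinear parabolic equation and to construct its solution from the variation-of-constants formula
\[
  g(t) = e^{At} g_0 + \int_0^t e^{A(t-s)} f(g(s))\, ds ,
\]
using the two structural facts already recorded: $A$ generates an analytic $C_0$-semigroup on $E$, and $f$ is locally Lipschitz from $\Pi$ into $E$. Because the Neumann Laplacian block of $A$ carries $0$ in its spectrum, I would first replace $A$ by $A_1 = A - \lambda I$ with $\lambda > 0$, so that $-A_1$ is sectorial and positive, and introduce the fractional power spaces $E^\theta = \mathcal{D}((-A_1)^\theta)$; since the lower three blocks of $A$ are bounded, only the $u$-slot is rescaled and one has $E^{1/2} = \Pi$ while $E^0 = E$. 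The smoothing estimate furnished by analyticity,
\[
  \|(-A_1)^{1/2} e^{At}\| \le M\, t^{-1/2} e^{-\lambda t}, \qquad t > 0,
\]
is the device that lets the $\Pi$-defined nonlinearity act on $E$-valued data.

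First I would establish a unique local mild solution by a contraction argument. I would work in the complete metric space of curves $g \in C((0,T];\Pi)\cap C([0,T];E)$ with $g(0)=g_0$ and finite weighted norm $\sup_{0<t\le T} t^{1/2}\|g(t)\|_\Pi$, restricted to a ball whose radius is dictated by $\|g_0\|_E$. Feeding the smoothing estimate and the local Lipschitz bound $\|f(g_1)-f(g_2)\|_E \le L\|g_1-g_2\|_\Pi$ into the Duhamel map and evaluating the resulting Beta integral $\int_0^t (t-s)^{-1/2}s^{-1/2}\,ds = B(\tfrac{1}{2},\tfrac{1}{2})$, one finds the contraction constant to be proportional to $T^{1/2}$, hence $<1$ for $T$ small; the Banach fixed point theorem then yields the unique mild solution with $g \in C([0,T);E)$ and $g(t)\in\Pi$ for $t>0$. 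To reach the weak-solution regularity $g\in L^2_{loc}([0,T),\Pi)$ demanded by Definition \ref{Dwksn} --- note the pointwise bound $\|g(t)\|_\Pi \lesssim t^{-1/2}$ is not square-integrable at the origin --- I would supplement this with the standard parabolic energy identity obtained by testing \eqref{ueq} with $u$ and integrating in space and time; the favorable sign of $-bu^3$ dominates the remaining nonlinear terms and delivers $u\in L^2_{loc}([0,T),H^1)$, the components $v,w,\rho$ lying in $C([0,T);L^2)$ directly.

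For the strong-solution claim I would invoke analyticity once more. For any $t_0\in(0,T)$ the mild solution already satisfies $g(t_0)\in\Pi=E^{1/2}$, so restarting the Cauchy problem from $g(t_0)$ and applying the $E^{1/2}$-version of the local theory (now with data in the interpolation space) upgrades the solution to $C([t_0,T);\Pi)\cap C^1((t_0,T);\Pi)$ and makes it solve \eqref{pb} in the classical sense, with the components $v,w,\rho$ inheriting their time regularity directly from the ordinary differential equations \eqref{veq}--\eqref{peq} once $u$ is controlled. Continuous dependence in $E$ would then follow by subtracting the Duhamel formulas for two solutions, estimating via the same smoothing and Lipschitz bounds, and closing with a singular Gronwall--Henry inequality to bound $\|g_1(t)-g_2(t)\|_E$ by $\|g_{1,0}-g_{2,0}\|_E$ on compact subintervals.

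The step I expect to be the main obstacle is this first local construction, precisely because the data lie only in $E$ whereas $f$ is defined and Lipschitz only on the smaller space $\Pi$, so the contraction cannot be run in $C([0,T];E)$ in the naive way. Everything rests on marrying the $t^{-1/2}$ analytic smoothing to the weighted supremum norm so that the Beta-function bound acquires the extra vanishing factor $T^{1/2}$. A companion technical point is the verification that $f$, which contains the cubic $u^3$ and the memristive coupling $k_1\varphi(\rho)u$ with $\varphi(\rho)=c+\gamma\rho+\delta\rho^2$, is genuinely Lipschitz from $\Pi$ into $E$ on bounded sets; this uses the embedding $H^1(\Omega)\hookrightarrow L^6(\Omega)$ valid for $n\le 3$, which is exactly where the dimensional restriction enters.
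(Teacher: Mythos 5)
Your route---mild solutions via Duhamel, fractional power spaces, and a weighted-norm contraction---is genuinely different from the paper's proof, which constructs the weak solution by Galerkin approximation (for the PDE component, combined with the ODE existence theorem), passes to the limit with the Section 3 energy estimates and a Lions--Magenes weak compactness argument, and obtains strong solutions from parabolic regularity. Unfortunately your construction has a gap at exactly the step you flagged as the main obstacle, and it is not repairable inside the fixed-point framework. Your contraction constant ``proportional to $T^{1/2}$'' comes from evaluating $\int_0^t (t-s)^{-1/2}s^{-1/2}\,ds = B(\tfrac12,\tfrac12)$, which implicitly treats the Lipschitz constant of $f$ as uniform along the curves. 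It is not: $f$ contains $u^3$, so its Lipschitz constant on a $\Pi$-ball of radius $\varrho$ is of order $\varrho^2$, while a curve in your weighted ball satisfies only $\|g(s)\|_\Pi \le \varrho\, s^{-1/2}$. The Duhamel difference therefore carries the factor $\bigl(\|g_1(s)\|_\Pi^2+\|g_2(s)\|_\Pi^2\bigr)\sim \varrho^2 s^{-1}$, and the relevant integral is $\int_0^t (t-s)^{-1/2}\,s^{-1}\cdot s^{-1/2}\,ds=\int_0^t (t-s)^{-1/2}s^{-3/2}\,ds=\infty$. This is not a technicality one can weight away: for $n=3$ (allowed in the paper) the scaling-critical space for a cubic nonlinearity is $H^{1/2}(\Omega)$, so initial data merely in $E=L^2$ is supercritical, and no choice of weights or intermediate $L^q$ smoothing exponents closes a contraction. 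Local existence from $E$-data genuinely requires the dissipative signs of $-bu^3$ and $-k_1\delta\rho^2u$ (monotonicity), which fixed-point arguments cannot see and which are precisely what the paper's Galerkin energy estimates exploit.

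A second, independent defect is that the hypothesis your whole scheme rests on---$f:\Pi\to E$ locally Lipschitz---fails for the memristive coupling. The $\rho$-block of $e^{At}$ is just $e^{-k_2t}I$: the $\rho$-equation is an ODE, so $\rho$ receives no parabolic smoothing and remains merely $L^2(\Omega)$. But for $\rho\in L^2(\Omega)$ and $u\in H^1(\Omega)\hookrightarrow L^6(\Omega)$, the terms $\gamma\rho u$ and $\delta\rho^2 u$ need not lie in $L^2(\Omega)$: H\"older gives only $\rho u\in L^{3/2}(\Omega)$, and in $n=3$ the choice $\rho(x)=|x|^{-3/4}$ near the origin with $u$ smooth and nonvanishing there has $\rho\in L^2$ but $\rho^2u\notin L^2$. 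So the Duhamel map is not even well defined on your space, and the same defect undermines the continuous-dependence argument via the singular Gronwall--Henry inequality. (The paper's display \eqref{opfh} asserts this Lipschitz property too, but its actual existence proof never uses it: in the Galerkin/energy framework the memristive term is tested against $u$ and enters as $-k_1\delta\int_\Omega \rho^2u^2\,dx\le 0$, i.e.\ with a favorable sign, which is how the difficulty disappears, cf.\ \eqref{mterm}.) A semigroup-style proof along your lines could only be salvaged for better initial data (e.g.\ $u_0\in H^1$, $\rho_0\in L^6$) and low dimension, which is strictly weaker than the lemma as stated.
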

\begin{proof}
The existence and uniqueness of a weak solution local in time can be proved by the Galerkin approximation method for the PDE together with the basic existence theorem for ODE, based on the estimates similar to what we shall present in Section 3 and by the Lions-Magenes type of weak compactness argument \cite{CV, SY}. The statement about strong solution follows from the parabolic regularity \cite{SY} of the evolutionary equations in \eqref{pb}. 
\end{proof}

The goal of this work is to prove the existence of a unique global attractor for the dynamical system generated by this problem \eqref{pb}. The global attractor qualitatively characterizes the longtime and global dynamics in terms of asymptotically permanent patterns of all the solution trajectories of the system. We refer to \cite{CV, SY} for the theory details of infinite dimensional dynamical systems or called semiflow (if time $t \geq 0$). Here just list a few concepts for clarity.

\begin{definition} \label{Dabs}
Let $\{S(t)\}_{t \geq 0}$ be a semiflow on a Banach space $\ms{X}$. A bounded set $B^*$ of $\ms{X}$ is called an absorbing set for this semiflow, if for any given bounded subset $B \subset \ms{X}$ there is a finite time $T_B \geq 0$ such that $S(t)B \subset B^*$ for all $t > T_B$.
\end{definition}

\begin{definition} \label{Dasmp}
A semiflow $\{S(t)\}_{t \geq 0}$ on a Banach space $\ms{X}$ is called asymptotically compact, if for any bounded sequence $\{z_n \}$ in $\ms{X}$ and any monotone increasing sequences $0 < t_n \to \infty$, there exist subsequences $\{z_{n_k}\}$ of $\{z_n \}$ and $\{t_{n_k}\}$ of $\{t_n\}$ such that $\lim_{k \to \infty} S(t_{n_k}) z_{n_k}$ exists in $\ms{X}$. 
\end{definition}

\begin{definition}[Global Attractor] \label{Dgla}
A set $\mathscr{A}$ in a Banach space $\ms{X}$ is called a global attractor for a semiflow $\csg$ on $\ms{X}$, if the following two properties are satisfied:

(i) $\mathscr{A}$ is a nonempty, compact, and invariant set in the space $\ms{X}$,
$$
	S(t) \ms{A} = \ms{A}, \quad t \geq 0.
$$

(ii) $\mathscr{A}$ attracts any given bounded set $B \subset \ms{X}$ in the sense 
$$
	\text{dist}_{\ms{X}} (S(t)B, \mathscr{A}) = \sup_{x\, \in \, B} \inf_{y \,\in \,\mathscr{A}} \| S(t)x - y \|_{\ms{X}} \to 0, \;\;  \text{as} \; \; t \to \infty.
$$
\end{definition}

\begin{proposition}\cite{CV, SY}  \label{L:basic}
Let $\{S(t)\}_{t\geq 0}$ be a semiflow on a Banach space $\ms{X}$. If the following two conditions are satisfied\textup{:}

\textup{(i)} there exists a bounded absorbing set $B^* \subset \ms{X}$ for $\{S(t)\}_{t\geq 0}$, and

\textup{(ii)} the semiflow $\{S(t)\}_{t\geq 0}$ is asymptotically compact on $\ms{X}$,

\noindent
then there exists a unique global attractor $\ms{A}$ in $\ms{X}$ for the semiflow $\{S(t)\}_{t\geq 0}$ and 
\beq \bl{glatr}
        \ms{A} = \bigcap_{\tau \,\geq \,0} \; \overline{\bigcup_{t \,\geq \,\tau} \, (S(t)B^*)}.
\eeq
\end{proposition}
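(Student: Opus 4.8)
The plan is to identify the candidate set $\ms{A}$ in \eqref{glatr} with the $\omega$-limit set of the absorbing set $B^*$ and to verify the three defining properties of Definition \ref{Dgla} together with uniqueness, invoking the two hypotheses throughout. Write $\omega(B^*) = \bigcap_{\tau \geq 0} \overline{\bigcup_{t \geq \tau} S(t)B^*}$, and first record the sequential characterization that is used at every later step: $y \in \omega(B^*)$ if and only if there exist $x_n \in B^*$ and $t_n \to \infty$ with $S(t_n)x_n \to y$ in $\ms{X}$.

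First I would establish that $\ms{A} = \omega(B^*)$ is nonempty and compact. Nonemptiness is immediate: pick any $x \in B^*$ and any $t_n \to \infty$; by asymptotic compactness (condition (ii)) the sequence $S(t_n)x$ has a convergent subsequence, whose limit lies in $\omega(B^*)$ by the characterization. For compactness, take an arbitrary sequence $\{y_m\} \subset \omega(B^*)$; for each $m$ pick $x_m \in B^*$ and $t_m \geq m$ with $\|S(t_m)x_m - y_m\|_{\ms{X}} < 1/m$, then apply asymptotic compactness to $\{S(t_m)x_m\}$ to extract a convergent subsequence. Its limit both serves as a limit of the corresponding $\{y_m\}$ and belongs to the closed set $\omega(B^*)$, which yields compactness.

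Next I would prove the attraction property (ii) of Definition \ref{Dgla}. The key case is that $\ms{A}$ attracts $B^*$ itself, argued by contradiction: if $\dist_{\ms{X}}(S(t)B^*, \ms{A}) \not\to 0$, there exist $\epsilon > 0$, $t_n \to \infty$, and $x_n \in B^*$ with $\dist_{\ms{X}}(S(t_n)x_n, \ms{A}) \geq \epsilon$; asymptotic compactness then yields a subsequence with $S(t_{n_k})x_{n_k} \to y \in \omega(B^*) = \ms{A}$, contradicting the lower bound. Attraction of a general bounded set $B$ follows by absorption: since $B^*$ is absorbing there is $T_B$ with $S(t)B \subset B^*$ for $t \geq T_B$, and the semigroup property $S(t + T_B) = S(t)S(T_B)$ reduces attraction of $B$ to that of $B^*$.

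The principal obstacle I expect is the invariance $S(t)\ms{A} = \ms{A}$ in part (i). The inclusion $S(t)\ms{A} \subseteq \ms{A}$ uses continuity of each map $S(t)$: for $y = \lim S(t_n)x_n \in \ms{A}$ one gets $S(t)y = \lim S(t + t_n)x_n \in \ms{A}$ since $t + t_n \to \infty$. The reverse inclusion $\ms{A} \subseteq S(t)\ms{A}$, i.e.\ negative invariance, is where asymptotic compactness is essential once more: given $y = \lim S(t_n)x_n$, apply asymptotic compactness to the shifted sequence $\{S(t_n - t)x_n\}$ (for $t_n > t$) to obtain a convergent subsequence with limit $z \in \ms{A}$, and then continuity of $S(t)$ gives $S(t)z = \lim S(t)S(t_n - t)x_n = \lim S(t_n)x_n = y$, so $y \in S(t)\ms{A}$. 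Finally, uniqueness follows from attraction plus invariance: if $\ms{A}_1, \ms{A}_2$ are two global attractors, compactness makes each bounded, so $\ms{A}_2$ attracts the invariant set $\ms{A}_1$, forcing $\dist_{\ms{X}}(\ms{A}_1, \ms{A}_2) = \dist_{\ms{X}}(S(t)\ms{A}_1, \ms{A}_2) \to 0$ and hence $\ms{A}_1 \subseteq \ms{A}_2$; symmetry gives equality. Beyond routine bookkeeping, the only real subtlety is the repeated, careful use of asymptotic compactness to manufacture the limit points and preimages needed for compactness and for negative invariance.
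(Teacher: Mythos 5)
The paper gives no proof of Proposition \ref{L:basic}\,---\,it is quoted as a known result from the cited monographs \cite{CV, SY}\,---\,and your argument is precisely the standard one those references present: take $\ms{A}=\omega(B^*)$, use asymptotic compactness to manufacture limit points (nonemptiness, compactness, attraction by contradiction, and negative invariance), use continuity of each $S(t)$ for positive invariance, and combine attraction with invariance for uniqueness. Your proof is correct in all of these steps, so it matches the approach the paper relies on and nothing further is needed.
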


The Young's inequality in a general form will be used: For any nonnegative numbers $x$ and $y$, if $\frac{1}{p} + \frac{1}{q} = 1$, one has
\beq \bl{Yg}
	x\,y  \leq \ve x^p + C(\ve, p)\, y^q, \qquad C(\ve, p) = \ve^{-q/p},
\eeq
where constant $\ve > 0$ can be arbitrarily small. 

\section{\textbf{Uniform Estimates and Absorbing Dynamics}}

The new feature in this four-dimensional memristive Hindmarsh-Rose neuron model \eqref{ueq} - \eqref{peq} is the product coupling of the nonlinear memductance term $k_1 \vp (\rho) u$ in the membrane potential equation \eqref{ueq}. In this section we first prove the global existence of all the weak solutions in time of the initial value problem \eqref{pb}. Through careful and sophisticated maneuver of uniform inequality estimates, it will be shown that there exists an absorbing set in the state space $E$ for the solution semiflow. This dissipative dynamics result is valid without any conditions on all the 14 biological parameters in the model equations as naturally described. 

\begin{theorem} \label{T1}
For any given initial state $g_0 = (u_0, v_0, w_0, \rho_0) \in E$, there exists a unique global weak solution in time, $g(t) = (u(t), v(t), w(t), \rho (t)), \, t \in [0, \infty)$, of the initial value problem \eqref{pb} for the diffusive Hindmarsh-Rose equations with memristor \eqref{ueq}-\eqref{peq}. The weak solution turns out to be a strong solution on the interval $(0, \infty)$. 
\end{theorem}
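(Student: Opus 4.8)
The plan is to promote the local weak solution furnished by Lemma \ref{Lwn} to a global-in-time one by excluding finite-time blow-up, and then to read off the strong-solution assertion from parabolic smoothing. By the standard continuation principle for the evolutionary equation \eqref{pb}, the weak solution $g(t; g_0)$ exists on a maximal interval $[0, T_{\max})$ and, if $T_{\max} < \infty$, then $\limsup_{t \to T_{\max}^-} \| g(t) \|_E = \infty$. It therefore suffices to establish an a priori bound on $\| g(t) \|_E$ that stays finite on every bounded time interval; in fact the dissipative energy inequality below yields a bound uniform for all $t \ge 0$ at once.

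First I would test the four scalar equations \eqref{ueq}--\eqref{peq} against $u$, $\lambda v$, $\mu w$ and $\nu \rho$ in $L^2(\gw)$, with positive weights $\lambda, \mu, \nu$ to be fixed, and set $\mathcal{E}(t) = \| u \|^2 + \lambda \| v \|^2 + \mu \| w \|^2 + \nu \| \rho \|^2$. The Neumann condition \eqref{nbc} turns the diffusion into $- \eta \| \nabla u \|^2 \le 0$, and the one genuinely coercive nonlinear term is $- b \int_\gw u^4 \, dx$, which must serve as the dissipation budget for every term of indefinite sign. The delicate contributions are the two cubic couplings. The Hindmarsh-Rose coupling $- \lambda \gb \inpt{u^2, v}$ is controlled through Young's inequality \eqref{Yg} by
\[
\lambda \gb\, u^2 |v| \le \tfrac{b}{2} u^4 + \tfrac{\lambda^2 \gb^2}{2b}\, v^2 ,
\]
so choosing the weight $\lambda = b / \gb^2$ makes the $v^2$-remainder exactly $\tfrac{\lambda}{2} \| v \|^2$, absorbed by the linear damping $- \lambda \| v \|^2$, while only half of the $u^4$ budget is spent. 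This weighting is precisely what removes any parameter restriction. The memristor coupling — the genuinely new term — contributes
\[
- k_1 \inpt{\vp(\rho) u, u} = - k_1 c\, \| u \|^2 - k_1 \ga \inpt{\rho u, u} - k_1 \de\, \| \rho u \|^2 ,
\]
whose leading cubic piece $- k_1 \de \| \rho u \|^2 \le 0$ has the favorable sign because $k_1, \de > 0$ and hence supplies extra dissipation. Its indefinite companion is absorbed into it via \eqref{Yg},
\[
k_1 |\ga|\, |\inpt{\rho u, u}| \le \tfrac{1}{2} k_1 \de\, \| \rho u \|^2 + \tfrac{k_1 \ga^2}{2 \de}\, \| u \|^2 ,
\]
leaving a net contribution bounded by $C \| u \|^2$, and the $- k_1 c \| u \|^2$ term is of the same harmless $\| u \|^2$-type.

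All remaining terms are of lower order: the self-term $a \inpt{u^2, u} \le \ve \int_\gw u^4 + C$, the purely quadratic couplings $\inpt{v - w, u}$, $\mu q \inpt{u - u_e, w}$ and $\nu \inpt{u, \rho}$, the constant forcings $J_e \int_\gw u\, dx$ and $\lambda \ap \int_\gw v\, dx$, and every $\| u \|^2$-type remainder handled through $\| u \|^2 \le \ve \int_\gw u^4 + C(\ve)$. Fixing $\mu, \nu$ conveniently and then the finitely many $\ve$'s small enough that their total $u^4$-consumption stays below the remaining half of $b$, while the damping terms $- \lambda \| v \|^2$, $- \mu r \| w \|^2$, $- \nu k_2 \| \rho \|^2$ swallow the quadratic remainders, one arrives at a dissipative differential inequality
\[
\frac{d}{dt} \mathcal{E}(t) + C_1\, \mathcal{E}(t) \le C_2
\]
with $C_1, C_2 > 0$ depending only on the model parameters. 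Gronwall's lemma then gives $\mathcal{E}(t) \le \mathcal{E}(0) e^{- C_1 t} + C_2 / C_1$ for all $t \ge 0$, so $\| g(t) \|_E$ cannot blow up and $T_{\max} = \infty$: the weak solution is global. Finally, for each $t_0 \in (0, \infty)$ the smoothing statement \eqref{stsl} of Lemma \ref{Lwn} upgrades $g$ to a strong solution on $[t_0, \infty)$, and since $t_0 > 0$ is arbitrary, $g$ is a strong solution on $(0, \infty)$.

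I expect the main obstacle to be the simultaneous bookkeeping of the two cubic nonlinearities against the single coercive budget $- b \int_\gw u^4$: one must verify that the memristor term $- k_1 \de \| \rho u \|^2$ absorbs its own cross term while the weight $\lambda = b / \gb^2$ confines the Hindmarsh-Rose coupling $- \lambda \gb \inpt{u^2, v}$ to half of the $u^4$ budget, so that the two dissipation demands never over-commit what is available. Once the order of the Young splittings and the choice of weights are arranged so that each coercive term is spent only on the terms it can dominate, the estimate closes with no condition on any of the fourteen parameters, exactly as asserted.
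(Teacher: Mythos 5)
Your proposal is correct and follows essentially the same route as the paper: weighted $L^2$ energy estimates (your weight $\lambda = b/\gb^2$ on the $v$-equation plays the same role as the paper's scaling $C_1 = (\gb^2+5)/b$ on the $u$-equation), absorption of the indefinite memristor cross term by the sign-definite piece $-k_1\de \int_\gw \rho^2 u^2\,dx$ (the paper does this by completing the square in $\vp(\rho)$, which is the same Young-type trick), a Gronwall inequality yielding a uniform-in-time bound on $\|g(t)\|_E$, and finally parabolic regularity from Lemma \ref{Lwn} for the strong-solution claim on $(0,\infty)$.
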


\begin{proof}
Taking the $L^2$ inner-product $\inpt{\eqref{ueq}, C_1 u(t)}$ with a constant $C_1 > 0$, we get
\begin{equation} \label{u}
	\begin{split}
	\frac{C_1}{2} \frac{d}{dt} \|u \|^2 & + C_1 \eta \| \nabla u\|^2 = \int_\gw C_1 (au^3 -bu^4 + uv - uw +J_e u - k_1 \vp (\rho)u^2)\, dx \\
	& =  \int_\gw C_1 (au^3 -bu^4 + uv - uw + J_e u - k_1 \left( c + \ga \rho + \delta \rho^2) u^2 \right)\, dx.
	\end{split}
\end{equation}
Taking the $L^2$ inner-products $\inpt{\eqref{veq}, v(t)}$ and  $\inpt{\eqref{weq}, w(t)}$ and by Young's inequality \eqref{Yg}, we have
\begin{equation} \label{v}
	\begin{split}
	&\frac{1}{2} \frac{d}{dt} \|v \|^2 = \int_\gw (\ap v - \gb u^2 v - v^2)\, dx \\
	\leq &\int_\gw \left(\ap v +\frac{1}{2} (\gb^2 u^4 + v^2) - v^2\right) dx = \int_\gw \left(\ap v +\frac{1}{2} \gb^2 u^4 - \frac{1}{2} v^2\right) dx \\
	\leq & \int_\gw \left(2\ap^2 + \frac{1}{8} v^2 +\frac{1}{2} \gb^2 u^4 - \frac{1}{2} v^2\right) dx = \int_\gw \left(2\ap^2 +\frac{1}{2} \gb^2 u^4 - \frac{3}{8} v^2\right) dx
	\end{split}
\end{equation}
and
\begin{equation} \label{w}
	\begin{split}
	&\frac{1}{2} \frac{d}{dt} \|w \|^2 = \int_\gw (q (u - u_e)w - rw^2)\, dx  \\
	\leq & \int_\gw \left(\frac{q^2}{2r} (u - u_e)^2 + \frac{1}{2} r w^2 - r w^2 \right) dx \leq \int_\gw \left(\frac{q^2}{r} (u^2 + u_e^2) - \frac{1}{2} r w^2 \right) dx.
	\end{split}
\end{equation}
Taking the $L^2$ inner-products $\inpt{\eqref{peq}, \rho (t)}$ and we get
\beq \bl{rh}
	\frac{1}{2} \frac{d}{dt} \|\rho \|^2 = \int_\gw (u\rho - k_2 \rho^2)\, dx \leq \int_\gw \left( \frac{1}{2k_2} u^2 - \frac{k_2}{2} \rho^2 \right)\, dx \leq \int_\gw \left(u^4 - \frac{k_2}{2} \rho^2 \right) dx + \frac{1}{4k_2^2}\, |\gw |.
\eeq

Now Choose the scaling constant in \eqref{u} to be $C_1 = (\gb^2 + 5)/b$ so that 
\beq \bl{C1b}
	- \int_\gw C_1 b u^4 \, dx + \int_\gw \frac{1}{2} \gb^2 u^4\, dx \leq \int_\gw (- 5 u^4)\, dx.
\eeq 
Then we estimate the following terms on the right-hand side of \eqref{u} by using Young's inequality \eqref{Yg} in an appropriate way:
\begin{gather*}
	\int_\gw C_1 au^3\, dx \leq \frac{3}{4} \int_\gw u^4\, dx + \frac{1}{4}\int_\gw (C_1 a)^4 \, dx \leq \int_\gw u^4\, dx + (C_1 a)^4 |\gw|, \\
	\int_\gw C_1 (uv - uw + J_e u)\, dx \leq \int_\gw \left(2(C_1 u)^2 + \frac{1}{8} v^2 + \frac{(C_1 u)^2}{r} + \frac{1}{4} r w^2 + C_1 u^2 + C_1J_e^2 \right) dx \\
	\leq \int_\gw u^4 \, dx + \left[C_1^2 \left(2 +\frac{1}{r}\right) + C_1\right]^2 |\gw | + \int_\gw \left( \frac{1}{8} v^2 + \frac{1}{4} r w^2 + C_1J_e^2 \right) dx,
\end{gather*}
and by completing square,
\beq \bl{mterm}
	\int_\gw (- C_1 k_1 (c + \ga \rho + \delta \rho^2) u^2)\, dx \leq C_1 k_1 \left(| \,c\, | + \frac{\ga^2}{4 \de}\right) \int_\gw u^2 \, dx.
\eeq
In \eqref{w}, we have
\begin{gather*}
	\int_\gw \frac{q^2}{r} u^2 \, dx \leq \int_\gw \left(\frac{u^4}{2} + \frac{q^4}{2r^2}\right) dx  \leq \int_\gw u^4\, dx + \frac{q^4}{r^2} |\gw|.
\end{gather*}
Substitute the above term estimates with \eqref{C1b} and \eqref{mterm} into \eqref{u} - \eqref{rh}. We obtain
\beq \label{g}
	\begin{split}
	&\frac{1}{2} \frac{d}{dt} (C_1 \|u\|^2 +  \|v\|^2 +  \|w\|^2 + \| \rho \|^2) + C_1 \eta \|\nb u\|^2  \\
	\leq & \int_\gw C_1 (au^3 -bu^4 + uv - uw + J_e u - k_1 (c + \gamma \rho + \delta \rho^2) u^2)\, dx \\
	& + \int_\gw \left[2\ap^2 +\frac{1}{2} \gb^2 u^4 - \frac{3}{8} v^2\right] + \left[\frac{q^2}{r} (u^2 + u_e^2) - \frac{r}{2} w^2 \right] dx + \int_\gw \left[u^4 - \frac{k_2}{2}\rho^2 \right] dx + \frac{| \gw |}{4k_2^2}  \\
	\leq & \int_\gw (4 - 5)u^4\, dx + C_1 k_1 \left(| \,c\, | + \frac{\ga^2}{4 \de}\right) \int_\gw u^2 \, dx  \\
	& + \int_\gw \left(\frac{1}{8} - \frac{3}{8}\right) v^2\, dx + \int_\gw \left(\frac{1}{4} - \frac{1}{2} \right) rw^2\, dx \\
	& + |\gw | \left[ (C_1 a)^4 + C_1 J_e^2  + \left(C_1^2 \left(2 +\frac{1}{r}\right) + C_1\right)^2 + 2\ap^2 + \frac{q^2 u_e^2}{r} + \frac{q^4}{r^2} + \frac{1}{4k_2^2}\right] \\
	= &\, - \int_\gw \left[ u^4 - C_1 k_1 \left[|c| + \frac{\ga^2}{4 \de} \right]u^2 + \frac{1}{4} v^2 + \frac{r}{4} w^2 + \frac{k_2}{2} \rho^2 \right]dx + C_2 |\gw |\, ,
	\end{split}
\eeq
where $C_2 > 0$ is the constant given by 
\beq \bl{C2}
	C_2 = (C_1 a)^4 + C_1 J_e^2  + \left[C_1^2 \left(2 +\frac{1}{r}\right) + C_1\right]^2 + 2\ap^2 + \frac{q^2 u_e^2}{r} + \frac{q^4}{r^2} + \frac{1}{4k_2^2} \, .
\eeq
Note that
$$
	u^4 - C_1 k_1 \left[|c| + \frac{\ga^2}{4 \de} \right] u^2 \geq \frac{1}{2} u^4 - 2 C_1^2 k_1^2 \left[|c| + \frac{\ga^2}{4 \de} \right]^2.
$$
Thus \eqref{g} yields the following uniform grouping estimate for all the solutions of the memristive Hindmarsh-Rose system \eqref{pb},
\beq \label{Es}
	\begin{split}
	&\frac{d}{dt} \left(C_1 \|u(t)\|^2 + \|v(t)\|^2 + \|w(t)\|^2 + \| \rho (t)\|^2 \right)  + C_1 \eta \|\nb u\|^2  \\
	+ & \int_\gw \left(u^4(t, x) + \frac{1}{2} v^2 (t, x) + \frac{r}{2} w^2 (t, x) + k_2 \rho^2 (t, x) \right) dx  \leq C_3 |\gw|, 
	\end{split}
\eeq
where 
\beq \bl{C3}
	C_3 = 2C_2 + 4C_1^2 k_1^2 \left[|c| + \frac{\ga^2}{4 \de} \right]^2,
\eeq
for $t \in I_{max} = [0, T_{max})$, which is the maximal time interval of solution existence. Furthermore, since 
$$ 
	u^4 \geq \frac{C_1}{2} u^2 - \frac{C_1^2}{4}\, ,
$$
it follows from \eqref{Es} that
\begin{equation*}
	\begin{split}
	&\frac{d}{dt} \left(C_1 \|u(t)\|^2 + \|v(t)\|^2 + \|w(t)\|^2 + \| \rho (t)\|^2 \right)  + C_1 \eta \,\|\nb u\|^2  \\
	+ &\, \int_\gw \frac{1}{2} \left(C_1 u^2(t, x) + v^2 (t, x) + r w^2 (t, x) + k_2^2 \rho^2 (t, x)\right) dx \leq \left(C_3 + \frac{C_1^2}{4}\right) |\gw |.
	\end{split}
\end{equation*}
Set $\gl = \frac{1}{2} \min \{1, r, k_2 \}$. Then we have, for $t \in [0, T_{max})$, 
\begin{equation} \label{Gq}
	\begin{split}
	& \frac{d}{dt} (C_1 \|u(t)\|^2 + \|v(t)\|^2 + \|w(t)\|^2 + \|\rho (t)\|^2)  + C_1 \eta \, \| \nabla u(t) \|^2   \\
	+ \gl \, ( & C_1 \| u(t)\|^2 + \| v(t) \|^2 + \|w(t)\|^2 + \| \rho (t)\|^2) \leq \left(C_3 + \frac{C_1^2}{4}\right) |\gw | \, .
	\end{split}
\end{equation}

Apply the Gronwall inequality to the differential inequality \eqref{Gq}. We obtain
\beq \label{ds}
	\begin{split}
	& C_1 \|u(t)\|^2 + \|v(t)\|^2 + \|w(t)\|^2 + \| \rho (t) \|^2  \\[5pt]
	\leq &\, e^{- \gl t} (C_1 \|u_0\|^2 + \|v_0\|^2 + \|w_0\|^2 + \| \rho_0\|^2) + M |\gw |,  \quad \text{for} \;\,  t \in [0, \infty),
	\end{split}
\eeq 
where 
\beq \bl{M}
	M = \frac{1}{\gl}\left(C_3 + \frac{C_1^2}{4}\right) = \frac{2}{\min \{1, r, k_2\}} \left(C_3 + \frac{C_1^2}{4}\right).
\eeq
The estimate \eqref{ds} shows that all the weak solutions will never blow up at any finite time because it is uniformly bounded. Namely, for all $t \in [0, \infty)$, it holds that 
$$
	C_1 \|u(t)\|^2 + \|v(t)\|^2 + \|w(t)\|^2 +\|\rho (t)\|^2 \leq C_1 \|u_0\|^2 + \|v_0\|^2 + \|w_0|^2 + \|\rho_0\|^2 + M |\gw |.
$$
Therefore the weak solution of the initial value problem \eqref{pb} formulated from the diffusive Hindmarsh-Rose equations with memristor \eqref{ueq} - \eqref{peq} exists globally in time for any initial data in the state space $E$ and the time interval of maximal existence will always be always $[0, \infty)$. The proof is completed.
\end{proof}

The global existence and uniqueness of the weak solutions as well as their continuous dependence on the initial data enable us to define the solution semiflow \cite{SY} of the diffusive Hindmarsh-Rose equations with memristor \eqref{ueq}-\eqref{peq} on the state space $E$ as follows:
$$
	S(t): g_0 \longmapsto g(t; g_0) = (u(t, \cdot), v(t, \cdot), w(t, \cdot), \rho (t, \cdot)), \quad  g_0 \in E, \; t \geq 0,
$$
where $g(t; g_0)$ is the weak solution with $g(0) = g_0$. We call this semiflow $\{S(t)\}_{t \geq 0}$ the \emph{memristive Hindmarsh-Rose semiflow} associated with the system \eqref{pb}. 

The next result exhibits the globally dissipative dynamics of this solution semiflow in the state space $E$.

\begin{theorem} \label{T2}
	There exists a bounded absorbing set in the space $E$ for the memristive Hindmarsh-Rose semiflow $\{S(t)\}_{t \geq 0}$, which is a bounded ball 	
\beq \label{abs}
	B_E = \{ g \in E: \| g \|^2 \leq K\}
\eeq 
where 
\beq \bl{K}
	K = \frac{M |\gw |}{\min \{C_1, 1\}} + 1.
\eeq
\end{theorem}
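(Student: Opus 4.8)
The plan is to derive the existence of the absorbing ball $B_E$ directly from the uniform Gronwall-type estimate \eqref{ds} already established in the proof of Theorem \ref{T1}. The key observation is that \eqref{ds} packages all four components into a single weighted norm that decays exponentially up to an additive constant proportional to $|\gw|$, and this is exactly the structure from which absorbing sets are extracted.

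First I would rewrite the left-hand side of \eqref{ds} in terms of the genuine $E$-norm. Since $C_1 \|u\|^2 + \|v\|^2 + \|w\|^2 + \|\rho\|^2 \geq \min\{C_1, 1\}\,\|g\|^2$ and simultaneously $\leq \max\{C_1, 1\}\,(\|u_0\|^2 + \cdots + \|\rho_0\|^2)$ when applied to the initial data, estimate \eqref{ds} yields
\beq \label{abspf}
	\min\{C_1, 1\}\, \|g(t)\|^2 \leq e^{-\gl t}\, \max\{C_1, 1\}\, \|g_0\|^2 + M |\gw|, \quad t \in [0, \infty).
\eeq
Dividing through by $\min\{C_1, 1\}$ gives an explicit upper bound for $\|g(t)\|^2$ consisting of a term that tends to zero as $t \to \infty$ (uniformly on bounded sets of initial data) and the constant $M|\gw| / \min\{C_1,1\}$, which is precisely $K - 1$ by the definition \eqref{K}.

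Next I would verify the absorbing property in the sense of Definition \ref{Dabs}. Given any bounded set $B \subset E$, say $\|g_0\|^2 \leq R$ for all $g_0 \in B$, the exponentially decaying term in \eqref{abspf} can be made smaller than $\min\{C_1,1\}$ once $t$ exceeds a threshold $T_B$ depending only on $R$; solving $e^{-\gl T_B}\max\{C_1,1\}\,R = \min\{C_1,1\}$ gives an explicit finite $T_B$. For all $t > T_B$ we then obtain $\|g(t)\|^2 \leq M|\gw|/\min\{C_1,1\} + 1 = K$, so that $S(t)B \subset B_E$, confirming that $B_E$ is a bounded absorbing ball. The bound $K$ is uniform in $g_0$, which is what makes $B_E$ a single fixed absorbing set rather than one depending on the initial data.

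I do not expect any genuine obstacle here: the entire analytic difficulty — the delicate choice of the scaling constant $C_1 = (\gb^2+5)/b$, the completing-square control of the memristive coupling term in \eqref{mterm}, and the absorption of the cross terms $uv$, $uw$, $J_e u$ via Young's inequality — has already been carried out in Theorem \ref{T1} to produce \eqref{ds}. The remaining work is the essentially bookkeeping step of translating the weighted dissipative estimate into the standard language of absorbing sets, together with recording the ``$+1$'' in the definition of $K$ to guarantee that the ball is genuinely absorbing (strict containment) rather than merely asymptotically attracting.
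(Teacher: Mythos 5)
Your proposal is correct and follows essentially the same route as the paper: both extract the absorbing ball directly from the Gronwall estimate \eqref{ds}, pass from the weighted quantity to the genuine $E$-norm via the factors $\min\{C_1,1\}$ and $\max\{C_1,1\}$, and exhibit an explicit finite entrance time for any bounded set. If anything, your entrance time, obtained by solving $e^{-\gl T_B}\max\{C_1,1\}\,R = \min\{C_1,1\}$, is slightly more careful than the paper's $T_0(B) = \frac{1}{\gl}\log^+ (\widehat{R}\,\max\{C_1,1\})$, which only drives the decaying term below $1$ rather than below $\min\{C_1,1\}$ and therefore lands inside the ball of radius $K$ only when $C_1 \geq 1$, whereas your version works for all parameter values.
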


\begin{proof}
From the globally uniform estimate \eqref{ds} in the proof of Theorem \ref{T1} we see that 
\beq \label{lsp2}
	\limsup_{t \to \infty} \; (\|u(t)\|^2 + \|v(t)\|^2 + \|w(t)\|^2 + \|\rho (t)\|^2) < K = \frac{M |\gw |}{\min \{C_1, 1\}} + 1
\eeq
for all weak solutions of \eqref{pb} with any initial state $g_0 \in E$. 

Moreover, for any given bounded set $B = \{g \in E: \|g \|^2 \leq \widehat{R}\}$ in $E$, where $\widehat{R}$ is a finite positive number, there exists a finite time 
$$
	T_0 (B) = \frac{1}{\gl} \log^+ (\widehat{R} \, \max \{C_1, 1\})
$$
such that  all the solutions $g(t; g_0)$ of \eqref{pb} satisfy $\|u(t)\|^2 + \|v(t)\|^2 + \|w(t)\|^2 + \|\rho(t)\|^2 < K$ for time $t  > T_0 (B)$ and any initial state $g_0 \in B$. Thus, by Definition \ref{Dabs}, the bounded ball $B_E$ is an absorbing set for the memristive Hindmarsh-Rose semiflow $\{S(t)\}_{t \geq 0}$ in the phase space $E$ and it is a dissipative dynamical system.
\end{proof}

\section{\textbf{Higher-Order Dissipativity of Memristive Hindmarsh-Rose Semiflow}}

In this section, we explore higher-order dissipativity of the memristive Hindmarsh-Rose semiflow for the $u$-component in space $L^4 (\gw)$. It will pave the way to prove the asymptotic compactness of this semiflow in the next section, which is the key condition for the existence of a global attractor in an infinite-dimensional state space.

\begin{theorem} \bl{Tu}
There exists a constant $Q > 0$ independent of any initial state, such that the $u$-component of the memristive HIndmarsh-Rose semiflow $\{S(t)\}_{t \geq 0}$ has the uniform dissipative property that for any given bounded set $B \subset E$ there is a finite time $T^u_B > 1$ and 
\beq \bl{Ku}
	\sup_{g_0 \in B}\, \int_\gw u^4 (t, x) \, dx \leq Q, \quad \text{for} \;\;  t > T^u_B.
\eeq
\end{theorem}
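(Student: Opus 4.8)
The plan is to derive a differential inequality for $y(t) = \int_\gw u^4(t,x)\, dx$ by testing the membrane-potential equation \eqref{ueq} against $u^3$, and then to upgrade that inequality into a uniform absorbing bound by feeding in the $L^2$-level information already established in Theorem \ref{T1} and Theorem \ref{T2}.

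First I would take the inner-product $\inpt{\eqref{ueq}, u^3(t)}$, which gives
$$\frac{1}{4}\frac{d}{dt}\int_\gw u^4\, dx = \eta\int_\gw u^3\gd u\, dx + \int_\gw\left(au^5 - bu^6 + vu^3 - wu^3 + J_e u^3 - k_1\vp(\rho)u^4\right) dx.$$
The diffusion term is controlled by integration by parts together with the Neumann condition \eqref{nbc}: $\int_\gw u^3\gd u\, dx = -3\int_\gw u^2|\nb u|^2\, dx \leq 0$, so it may simply be discarded. The genuinely dissipative term is $-b\int_\gw u^6\, dx$, and the entire strategy is to absorb every remaining term into this sixth-power dissipation.

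The memristor coupling is the first obstacle, but the completing-the-square estimate already used in \eqref{mterm} applies verbatim at this order: since $\vp(\rho) = c + \ga\rho + \de\rho^2 \geq c - \ga^2/(4\de)$ pointwise, one has $-k_1\vp(\rho)u^4 \leq k_1\left(|c| + \ga^2/(4\de)\right)u^4$, which converts the coupling into a harmless multiple of $u^4$ and, crucially, requires no control of $\rho$ beyond what is already available. The remaining positive contributions $au^5$, $vu^3$, $wu^3$, $J_e u^3$ and the $u^4$-term above are all subcritical relative to $u^6$ and are absorbed by Young's inequality \eqref{Yg}, for instance $|vu^3|\leq\ve u^6 + C\ve^{-1}v^2$, $|au^5|\leq\ve u^6 + C_\ve a^6$, and $k_1(|c|+\ga^2/(4\de))u^4\leq\ve u^6 + C_\ve$. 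Choosing the $\ve$'s small enough to retain, say, $-\tfrac{b}{2}\int_\gw u^6\, dx$, and then using $u^6 \geq \sigma u^4 - C_\sigma$ (again via \eqref{Yg}) to manufacture a linear damping term, I arrive at
$$\frac{d}{dt}\, y(t) + \gl_0\, y(t) \leq C_0\left(1 + \|v(t)\|^2 + \|w(t)\|^2\right)$$
for constants $\gl_0, C_0 > 0$ independent of the initial data.

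The final step confronts the more delicate second obstacle: the $L^4$-norm of $u$ at a positive time cannot be bounded in terms of $\|g_0\|_E$ alone, so a plain Gronwall argument on the inequality above does not produce an initial-data-independent bound. I would therefore invoke the uniform Gronwall lemma. By Theorem \ref{T2}, for any bounded $B\subset E$ there is $T_0(B)$ such that $\|v(t)\|^2 + \|w(t)\|^2 \leq K$ for $t > T_0(B)$, so the right-hand side above is bounded by a constant $C_4$ on $(T_0,\infty)$. Independently, integrating the master estimate \eqref{Es} over $[t, t+1]$ and using the same absorbing bound controls the running integral $\int_t^{t+1}\!\int_\gw u^4\, dx\, ds \leq C_5$ for $t > T_0(B)$. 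Feeding these two facts into the uniform Gronwall lemma yields $y(t) \leq C_5 + C_4/\gl_0 =: Q$ for all $t > T_0(B) + 1 =: T^u_B > 1$, which is precisely \eqref{Ku}. The main difficulty is thus not the algebra of the $u^3$-testing but ensuring uniformity over initial data; the integral estimate extracted from \eqref{Es} is exactly what supplies the missing a priori control and makes the uniform Gronwall lemma applicable.
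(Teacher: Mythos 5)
Your proposal is correct, and the first half (testing \eqref{ueq} against $u^3$, completing the square in $\rho$ to reduce the memristive term to $k_1\bigl(|c|+\ga^2/(4\de)\bigr)u^4$, and absorbing all subcritical terms into the $-b\int_\gw u^6\,dx$ dissipation via Young's inequality) is exactly what the paper does. Where you genuinely diverge is the uniformity step. The paper integrates \eqref{Gq} over $(0,1]$ to find a mean-value time $t_0\in(0,1)$ at which $\|u(t_0)\|_{H^1}$ is finite, uses the Sobolev embedding $H^1(\gw)\hookrightarrow L^4(\gw)$ to bound $\|u(t_0)\|_{L^4}$, and then runs a plain Gronwall estimate from $t_0$ onward; the price is that its bound \eqref{ub} carries the term $e^{-bt}\|u(t_0)\|_{L^4}^4$, and indeed the paper's constant $Q$ in \eqref{Q} still contains $\interleave B\interleave^2$, so strictly one must further invoke the exponential decay to recover a bound independent of $B$ as the theorem asserts. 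You instead extract the running integral $\int_t^{t+1}\!\int_\gw u^4\,dx\,ds\leq C_5$ directly from \eqref{Es} after the absorbing time $T_0(B)$ and feed it, together with the absorbing bound on $\|v\|^2+\|w\|^2$, into the uniform Gronwall lemma. This buys two things: you never need the embedding $H^1\hookrightarrow L^4$ or a well-chosen initial time, and your constant $Q=C_5+C_4/\gl_0$ is manufactured entirely from absorbing-set constants ($K$, $C_3$, $|\gw|$ and model parameters), so it is independent of $B$ by construction, which matches the statement of the theorem more cleanly than the paper's own formula for $Q$. The only cost is the reliance on the uniform Gronwall lemma, a standard tool (equivalently, one can argue elementarily: the integral bound produces a time $s\in[t,t+1]$ with $y(s)\leq C_5$, and then $y(t')\leq e^{-\gl_0(t'-s)}y(s)+C_4/\gl_0$ for $t'\geq s$), so this is a matter of packaging rather than a gap.
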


\begin{proof}
Take the $L^2$ inner-product $\langle \eqref{ueq}, u^3(t, \cdot) \rangle$ and use Young's inequality \eqref{Yg} appropriately to split the product terms in the integral below.  For $t > 0$ we have
\begin{equation}  \bl{uvq}
	\begin{split} 
	\frac{1}{4}&\, \frac{d}{dt} \|u(t)\|^{4}_{L^{4}} + 3 \eta \|u \nb u \|^2_{L^2} = \int_\gw [au^{5} - bu^{6} + u^{3} (v - w - J_e) - k_1 (c + \ga \rho + \de \rho^2) u^4] dx \\[2pt]
	\leq &\,\int_\gw \left[ \left(C_{a, b} + \frac{1}{4} bu^6\right) - b u^{6} + \left(\frac{1}{4} bu^6 + C_{b} (v^{2} + w^{2} + J_e^2) \right) + k_1 \left(| c| + \frac{\ga^2}{\de}\right) u^4 \right] dx \\
	\leq &\,\int_\gw \left[ \left(C_{a, b} + \frac{1}{4} bu^6\right) - b u^{6} + \left(\frac{1}{4} bu^6 + C_{b} (v^{2} + w^{2} + J_e^2) \right) \right] dx \\
	 &\, + \int_\gw \left[\frac{1}{4} bu^6 + \frac{16 \,k_1^3}{b^2}  \left(| c| + \frac{\ga^2}{\de}\right)^3 \right] dx. \\
	 \leq &\, - \frac{1}{4} \int_\gw b u^6\, dx + C_{b} \int_\gw (v^2 + w^2) \, dx + | \gw | \left[C_{a,b} + C_{b} J_e^2 + \frac{16\, k_1^3}{b^2}  \left(| c| + \frac{\ga^2}{\de}\right)^3 \right],
	\end{split}
\end{equation} 
where $C_{a,b}$ and $C_b$ are positive constants depending on $a, b$ and on $b$, respectively. By Theorem \ref{T2} and \eqref{lsp2}, for any given bounded set $B \subset E$, there is a finite time $\tau_B > 0$ such that
$$
	C_{b} \int_\gw (v^2(t, x) + w^2 (t, x)) \, dx \leq C_{b} K, \quad  \text{for} \;\,  t > \tau_B.
$$
Since $u^6 + 1 \geq u^4$, from \eqref{uvq} and the above inequality it follows that 
\beq \bl{u4}
	\frac{d}{dt} \|u(t)\|^{4}_{L^{4}} + \int_\gw bu^4\, dx \leq  | \gw | \left[b + C_{a,b} + C_{b} (K + J_e^2) + \frac{16\, k_1^3}{b^2}  \left(| c| + \frac{\ga^2}{\de}\right)^3 \right].
\eeq
Apply the Gronwall inequality to \eqref{u4} and it yields 

\beq \bl{ub}
	\|u(t)\|_{L^4}^4 \leq e^{-bt} \|u(t_0)\|_{L^4}^4 + \frac{1}{b} | \gw | \left[b + C_{a,b} + C_{b, r} (K + J_e^2) + \frac{16\, k_1^3}{b^2}  \left(| c| + \frac{\ga^2}{\de}\right)^3 \right], 
\eeq
for $t \geq t_0 > \tau_B$.

It remains to bound the $L^4$ norm of the initial state $u(t_0)$.  By Lemma \ref{Lwn}, for any weak solution of the memristive Hindmarsh-Rose evolutionary equation \eqref{pb}, the $u$-component has the regularity
$$
	u(t, \cdot) \in H^1 (\gw) \subset L^4 (\gw), \quad \text{for} \;\; t  > 0.
$$
One can integrate \eqref{Gq} over the time interval $(0, t]$ to get
$$
	 \int_0^t C_1 (\eta \|\nb u(s)\|^2 + \gl \|u(s)\|^2)\, ds \leq \max \{C_1, 1\} \|g_0\|^2 + t \gl M | \gw|, \; \; t \geq 0,
$$
where the constant $M$ is given in \eqref{M}. It follows that, for $t = 1$,
\begin{equation} \bl{H1b}
	\int_0^1 C_1 \|u(s)\|^2_{H^1}\, ds \leq \frac{1}{\min \{\eta, \gl \}} \left(\max \{C_1, 1\} \|g_0\|^2 +  \gl M | \gw|\right).
\end{equation}
Hence for any given bounded set $B \subset E$ and any initial state $g_0 \in B$, there exists a time point $t_0 \in (0, 1)$ such that 
\beq \bl{ut0}
	\|u(t_0)\|^2_{L^4} \leq \widehat{C} \|u(t_0)\|_{H^1}^2 \leq \frac{\widehat{C}}{C_1 \min \{\eta, \gl \}} \left(\max \{C_1, 1\} \interleave B \interleave^2 + \gl M | \gw|\right) 
\eeq 
where $\widehat{C}$ is the embedding coefficient of $H^1 (\gw)$ into $L^4 (\gw)$ and $\interleave B \interleave = \sup_{g_0 \in B} \|g_0\|$.

Finally, combining the inequalities \eqref{ub} and \eqref{ut0}, we conclude that for any given bounded set $B \subset E$ and any initial state $g_0 \in B$, there exists a finite time 
$T^u_B > \max \{1, \tau_B\}$ such that the target result of the inequality \eqref{Ku} is valid with the uniform ultimate bound
\beq \bl{Q}
	\begin{split}
	Q &= \left[\frac{\widehat{C}}{C_1 \min \{\eta, \gl \}} \left(\max \{C_1, 1\} \interleave B \interleave^2 + \gl M | \gw|\right) \right]^2   \\
	&\, + \frac{1}{b} | \gw | \left[b + C_{a,b} + C_{b, r} (K + J_e^2) + \frac{16\, k_1^3}{b^2}  \left(| c| + \frac{\ga^2}{\de}\right)^3 \right].
	\end{split}
\eeq
The proof is completed.
\end{proof}

\begin{corollary} \bl{Cu}
The component solution $u(t, x)$ of the memristive Hindmarsh-Rose semiflow $\{S(t)\}_{t \geq 0}$ has the dissipative property in the space $L^4 (\gw)$. For any given bounded set $B \subset E$, there exists a finite time $T^u_B > \max \{1, \tau_B\}$ such that 
\beq \bl{ucp}
	\bigcup_{t \, > \,T^u_B} \left(\bigcup_{g_0\, \in \,B} u(t, \cdot)\right) \, \text{is bounded in} \; L^4 (\gw) \; \text{and precompact in} \; L^2 (\gw).
\eeq
\end{corollary}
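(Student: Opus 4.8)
The plan is to separate the assertion into its two claims and handle them independently. The boundedness in $L^4(\gw)$ is essentially a restatement of Theorem \ref{Tu}: for $t > T^u_B$ the uniform bound \eqref{Ku} gives $\|u(t,\cdot)\|_{L^4}^4 \leq Q$ for every $g_0 \in B$, so the indicated union lies in the closed ball of radius $Q^{1/4}$ in $L^4(\gw)$. Since $\gw$ is bounded, H\"older's inequality also yields a uniform $L^2$-bound, which is exactly the boundedness hypothesis of the Kolmogorov--Riesz criterion. The real content is the precompactness in $L^2(\gw)$.

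For the precompactness I would \emph{not} try to produce a uniform $H^1$-bound by testing \eqref{ueq} against $-\gd u$: integrating the coupling term by parts would generate a factor $\nb \rho$, and the $\rho$-equation \eqref{peq} carries no diffusion, so $\rho(t,\cdot)$ has no a priori spatial regularity. Instead I would exploit parabolic smoothing through the Duhamel representation
\[
  u(t) = e^{\eta \gd}\, u(t-1) + \int_{t-1}^{t} e^{\eta \gd (t-\sigma)}\, N(\sigma)\, d\sigma, \quad N = a u^2 - b u^3 + v - w + J_e - k_1 \vp(\rho) u,
\]
where $e^{\eta \gd \tau}$ is the Neumann heat semigroup. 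The first step is to upgrade the memristive variable: multiplying \eqref{peq} by $\rho^3$ and using the $L^4$-bound \eqref{Ku} on the forcing $u$ gives, via the Gronwall inequality, a uniform bound on $\|\rho(t,\cdot)\|_{L^4}$ for large $t$. With $u$ and $\rho$ both uniformly bounded in $L^4(\gw)$, a H\"older count on each summand of $N$ (noting that $u^3$ and $\rho^2 u$ lie in $L^{4/3}$ exactly, while the remaining terms lie in $L^2 \subset L^{4/3}$ on the bounded domain) shows that $N(\sigma)$ is uniformly bounded in $L^{4/3}(\gw)$ for $\sigma > T^u_B$.

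The final step uses the analytic-semigroup estimate $\|(-\gd)^{s/2} e^{\eta \gd \tau}\|_{L^{4/3}\to L^2} \leq C\,\tau^{-s/2 - \frac{n}{2}(3/4-1/2)}$; for $n \leq 3$ the exponent stays above $-1$ for every $s \in (0,1)$, so the time singularity is integrable over the unit tail $(t-1,t)$, and smoothing the first term $e^{\eta\gd} u(t-1)$ out of the uniform $L^2$-bound handles the remaining piece. This yields a uniform bound for $u(t,\cdot)$ in some fractional space $H^s(\gw)$, $s \in (0,1)$, over all $t > T^u_B$ and $g_0 \in B$, whence the compact embedding $H^s(\gw) \hookrightarrow\hookrightarrow L^2(\gw)$ gives exactly the claimed precompactness; equivalently, this last step may be phrased as the verification of the equicontinuity-in-the-mean condition of the Kolmogorov--Riesz theorem, the form used elsewhere in the paper. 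The main obstacle is the control of the coupling term $\rho^2 u$: it is what forces the preliminary $L^4$-bootstrap of $\rho$, fixes $L^{4/3}$ as the natural space for $N$, and is the reason the low-regularity smoothing route is preferable to a direct energy estimate.
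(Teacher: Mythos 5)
Your first claim is fine: the $L^4(\gw)$-boundedness is, as you say, just a restatement of Theorem \ref{Tu} together with \eqref{Ku}, and this matches the paper.

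On the precompactness half, your instinct that genuine work is required is not only correct but puts you ahead of the paper itself: the paper's entire proof of this corollary is the assertion that the embedding $L^4(\gw)\hookrightarrow L^2(\gw)$ is compact on a bounded domain, and that assertion is false. Lebesgue-into-Lebesgue embeddings are continuous but never compact: on $\gw=(0,1)$ the sequence $u_n(x)=\sin(n\pi x)$ is uniformly bounded in $L^4$, converges weakly to $0$ in $L^2$, yet $\|u_n\|^2=1/2$ for all $n$, so it has no $L^2$-convergent subsequence. Thus boundedness in $L^4$ alone cannot yield \eqref{ucp}, and some smoothing mechanism for $u$ must be invoked, exactly as you propose. (This is not a local blemish: Theorem \ref{AC} later cites \eqref{ucp} together with a second non-compact embedding, $L^4(\gw)\hookrightarrow L^3(\gw)$, to obtain the translation estimate \eqref{utx}, so the gap propagates into the paper's compactness chain.)

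However, your own argument has a genuine gap at the $\rho$-bootstrap, and it is precisely the difficulty the paper flags at the start of Section 5. Equation \eqref{peq} is an ODE in $t$ for each fixed $x$, with no diffusion, so
\begin{equation*}
  \rho(t,\cdot) \,=\, e^{-k_2 t}\rho_0 + \int_0^t e^{-k_2(t-s)}\,u(s,\cdot)\,ds .
\end{equation*}
The Duhamel integral is indeed uniformly bounded in $L^4(\gw)$ for large $t$ by \eqref{Ku}, but the first term is a nonzero multiple of $\rho_0$, which is only assumed to lie in $L^2(\gw)$. If $\rho_0\in L^2\setminus L^4$, then $\rho(t,\cdot)\notin L^4(\gw)$ for \emph{every} finite $t$; there is no instant after which $\|\rho(t)\|_{L^4}$ becomes finite. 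Consequently, testing \eqref{peq} with $\rho^3$ cannot be initialized: the quantity you would feed into Gronwall is identically $+\infty$. With only $\rho\in L^2$ and $u\in L^4$ available, the memristive forcing $k_1\de\rho^2 u$ lies merely in the quasi-normed space $L^{4/5}(\gw)$ (H\"{o}lder with $1/r=1+1/4$), not in $L^{4/3}$, and there is no $L^p\to L^q$ heat-semigroup smoothing theory below $p=1$; so the Duhamel estimate cannot be closed as written.

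The natural repair is to bootstrap $u$, not $\rho$. In every $L^p$-estimate of $u$ the memristive term has a favorable sign: testing \eqref{ueq} with $u^{2k-1}$ gives $-k_1\int_\gw (c+\ga\rho+\de\rho^2)u^{2k}dx \le k_1\bigl(|c|+\ga^2/(4\de)\bigr)\int_\gw u^{2k}dx$, exactly as in \eqref{mterm} and \eqref{uvq}, the dangerous $\de\rho^2u^{2k}$ contribution being nonpositive. Hence the argument of Theorem \ref{Tu} runs for every even power, and a Moser--Alikakos iteration (using the damping $-bu^{3}$) yields a uniform $L^\infty$ bound on $u$ for large $t$. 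Once that is in hand, the worst term satisfies $\|\rho^2u\|_{L^1}\le \|\rho\|^2\,\|u\|_{L^\infty}\le K\|u\|_{L^\infty}$ uniformly, and the smoothing estimate $\|(-\gd)^{s/2}e^{\eta\gd\tau}\|_{L^1\to L^2}\lesssim \tau^{-s/2-n/4}$ has an integrable singularity on $(0,1)$ whenever $s<2-n/2$, which for $n\le 3$ leaves room for some $s>0$; your Duhamel argument then closes and the compact embedding $H^s(\gw)\hookrightarrow\hookrightarrow L^2(\gw)$ gives \eqref{ucp}. So your overall strategy is the right one and, suitably repaired, would prove what the paper's one-line argument does not.
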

\begin{proof}
 	Since the Sobolev embedding $L^4 (\gw) \hookrightarrow L^2 (\gw)$ is compact for the bounded region $\gw$, it is a direct consequence that the set in \eqref{ucp} is precompact in $L^2(\gw)$.
\end{proof}

\section{\textbf{Asymptotic Compactness of Memristive Hindmarsh-Rose Semiflow}}

In this section, we prove the asymptotic compactness, cf. Definition \ref{Dasmp}, of the memristive Hindmarsh-Rose solution semiflow $\{S(t)\}_{t \geq 0}$. This is a challenging issue as the components $v(t, x), w(t, x), \rho(t, x)$ of the memristive Hindmarsh-Rose equations formulated in \eqref{pb} do not have the regularized property in $x$ as time evolves. 

The leverage we use to tackle the asymptotic compactness is the Kolmogorov-Riesz compactness Theorem below shown in \cite[Theorem 5]{HO}. 

\begin{lemma} \bl{KR}
	Let $1 \leq p < \infty$ and $\gw \subset \mathbb{R}^n$ be a bounded domain with locally Lipschitz continuous boundary. A subset $\mathcal{F}$ in the function space $L^p (\gw)$ is precompact if and only if the following two conditions are satisfied\textup{:}

	\textup{1)} $\mathcal{F}$ is a bounded set in $L^p (\gw)$.
	
	\textup{2)} For every $\ve > 0$, there is some positive number $d > 0$ such that, for all $f \in \mathcal{F}$ and $y \in \mathbb{R}^n$ with $| y | < d $, it holds that
$$
	\int_\gw | f(x + y) - f(x)|^p \, dx < \ve^p.
$$
It is a convention that $f(x) = 0$ for $x \in \mathbb{R}^n \backslash \gw$.
\end{lemma}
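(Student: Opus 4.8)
The plan is to establish both implications of the stated equivalence, the tool-kit being the continuity of translation in $L^p$, mollification by an approximate identity, the Arzel\`a--Ascoli theorem, and the elementary principle that a subset of a complete space is precompact as soon as it can be uniformly approximated by precompact sets. Since $\gw$ is bounded and every $f \in \mc{F}$ is extended by zero to $\mb{R}^n$, all functions in $\mc{F}$ are supported in one fixed bounded set; consequently the ``tightness at infinity'' hypothesis that appears in the version of the theorem on all of $\mb{R}^n$ is automatic here, and only the two stated conditions are needed.

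For the forward implication, suppose $\mc{F}$ is precompact. Total boundedness immediately forces boundedness, which is condition 1). For condition 2), fix $\ve > 0$ and cover $\mc{F}$ by finitely many balls of radius $\ve/3$ centered at functions $f_1, \dots, f_N \in L^p(\gw)$. Translation is an isometry of $L^p(\mb{R}^n)$ and $\|\tau_y f_i - f_i\|_{L^p} \to 0$ as $y \to 0$ for each fixed $f_i$, so I choose $d > 0$ with $\|\tau_y f_i - f_i\|_{L^p} < \ve/3$ for all $i$ whenever $|y| < d$. Given any $f \in \mc{F}$, I pick $f_i$ with $\|f - f_i\|_{L^p} < \ve/3$ and estimate $\|\tau_y f - f\|_{L^p} \le \|\tau_y(f - f_i)\|_{L^p} + \|\tau_y f_i - f_i\|_{L^p} + \|f_i - f\|_{L^p} < \ve$, and restricting the integral from $\mb{R}^n$ to $\gw$ only decreases it, yielding condition 2).

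For the reverse implication, assume 1) and 2) and let $\phi_\de$ be a standard mollifier supported in the ball of radius $\de$. I argue in two steps. First, writing $f * \phi_\de - f = \int \phi_\de(y)(\tau_{-y} f - f)\, dy$ and applying Minkowski's integral inequality together with condition 2), one gets $\|f * \phi_\de - f\|_{L^p} \le \sup_{|y| \le \de} \|\tau_y f - f\|_{L^p}$ \emph{uniformly} in $f \in \mc{F}$, so $\mc{F}$ is approximated within any prescribed $\ve$ by the mollified family $\mc{F}_\de = \{ f * \phi_\de : f \in \mc{F}\}$ once $\de$ is small. Second, for each fixed $\de$ the family $\mc{F}_\de$ is precompact in $L^p(\gw)$: H\"older's inequality and condition 1) bound $\|f * \phi_\de\|_{L^\infty}$ and $\|\nabla(f * \phi_\de)\|_{L^\infty}$ uniformly over $f \in \mc{F}$ (the $\de$-dependent constants being harmless for fixed $\de$), so $\mc{F}_\de$ is uniformly bounded and equi-Lipschitz on the compact set $\overline{\gw}$; Arzel\`a--Ascoli then makes it precompact in $C(\overline{\gw})$, hence in $L^p(\gw)$ since $\gw$ is bounded. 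Combining the two steps through the abstract principle---if for every $\ve > 0$ the set $\mc{F}$ lies within $\ve$ of a precompact set, then $\mc{F}$ admits a finite net of arbitrarily small radius and is therefore totally bounded, hence precompact by completeness of $L^p(\gw)$---finishes the proof.

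The main obstacle is the reverse direction, and within it the delicate point is the interface between the two steps: one must verify that the mollification error $\|f * \phi_\de - f\|_{L^p}$ is controlled uniformly across the entire family $\mc{F}$ rather than merely for each fixed $f$, which is exactly where condition 2) is indispensable, and that mollification followed by restriction to $\gw$ behaves well near $\partial \gw$. The boundedness of $\gw$ is what removes the need for a separate tightness hypothesis and renders Arzel\`a--Ascoli applicable, while the locally Lipschitz boundary is a mild regularity under which the zero-extension, convolution, and restriction proceed without boundary pathologies. The bookkeeping of the abstract total-boundedness principle is routine once the two steps are in hand.
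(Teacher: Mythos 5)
The paper does not prove this lemma at all: it is quoted as Theorem 5 of the cited reference \cite{HO} and used as a black box in Section 5, so the comparison here is between your self-contained argument and the literature proof it replaces. Your proof is correct, and it is essentially the classical Kolmogorov--Riesz argument specialized to a bounded domain: necessity via an $\ve/3$-net together with strong continuity of translation on $L^p(\mathbb{R}^n)$ (restricting the integral from $\mathbb{R}^n$ to $\gw$ only decreases it); sufficiency via mollification, where condition 2) yields smallness of the mollification error \emph{uniformly} over $\mathcal{F}$, Arzel\`a--Ascoli yields precompactness of each mollified family $\mathcal{F}_\de$ in $C(\overline{\gw}) \hookrightarrow L^p(\gw)$, and the standard principle that a set lying within $\ve$ of a totally bounded set for every $\ve>0$ is totally bounded closes the argument in the complete space $L^p(\gw)$. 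What your route buys is independence from the full-space version of the theorem: \cite{HO} reduces the bounded-domain case to the $\mathbb{R}^n$ theorem, whereas you prove it directly.

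Two remarks. First, the one point that must be stated with care is which norm your Minkowski estimate lives in: condition 2) controls only $\bigl(\int_\gw |f(x+y)-f(x)|^p \, dx\bigr)^{1/p}$, not $\|\tau_y f - f\|_{L^p(\mathbb{R}^n)}$; the latter contains the additional term $\int_{\gw \setminus (\gw + y)} |f|^p \, dx$ (the mass of $f$ near the boundary), which conditions 1)--2) do not control directly. Your estimate must therefore be read as
$$
\|f * \phi_\de - f\|_{L^p(\gw)} \;\le\; \sup_{|y| \le \de} \Bigl(\int_\gw |f(x+y)-f(x)|^p \, dx\Bigr)^{1/p},
$$
which is exactly what Minkowski's integral inequality over $\gw$ gives and exactly what the final covering argument in $L^p(\gw)$ needs; with that reading the proof is airtight. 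Second, your argument never actually invokes the locally Lipschitz boundary: with the zero-extension convention built into condition 2), both directions go through for an arbitrary bounded measurable $\gw$, so you have in fact proved a slightly more general statement; the boundary hypothesis in the lemma is inherited from \cite{HO} and is harmless but idle in your proof, so your closing claim that it is needed to avoid ``boundary pathologies'' is unnecessary hedging rather than a required step.
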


\begin{theorem} \bl{AC}
	The solution semiflow $\{S(t)\}_{t \geq }$ generated by the diffusive Hindmarsh-Rose equations with memristor \eqref{pb} is asymptotically compact in the state space $E$.
\end{theorem}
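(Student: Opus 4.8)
The plan is to apply the Kolmogorov-Riesz criterion (Lemma \ref{KR}) coordinate-by-coordinate in $E = [L^2(\gw)]^4$. Given a bounded sequence of initial states $\{z_n\}$ and times $0 < t_n \to \infty$, I would write $S(t_n) z_n = (u_n, v_n, w_n, \rho_n)$. By the absorbing property (Theorem \ref{T2}) these states eventually lie in the bounded ball $B_E$, so condition (1) of Lemma \ref{KR} is automatic for every component, and the global estimate \eqref{ds} moreover furnishes a uniform $L^2$ bound on the entire trajectories $u(s; z_n)$ for all $s \geq 0$ and all $n$. For the $u$-component the work is already done: by Theorem \ref{Tu} and Corollary \ref{Cu} the set $\bigcup_{t > T^u_B} \bigcup_{g_0 \in B} u(t, \cdot)$ is precompact in $L^2(\gw)$, so by Lemma \ref{KR} it satisfies the translation condition uniformly, i.e. $\sup_{g_0 \in B, \, t > T^u_B} \int_\gw |u(t, x+y) - u(t,x)|^2\, dx \to 0$ as $|y| \to 0$. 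The whole issue is to transfer this spatial equicontinuity to $v, w, \rho$, which carry no smoothing in $x$.

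The mechanism I would use is the variation-of-constants representation of \eqref{veq}--\eqref{peq}, which exhibits $v, w, \rho$ as exponentially damped time integrals of $u$; for instance
$$
	\rho(t) = e^{-k_2 t}\rho_0 + \int_0^t e^{-k_2 (t-s)} u(s)\, ds,
$$
and analogously for $w$ (driving term $q(u - u_e)$, rate $r$) and $v$ (driving term $\ap - \gb u^2$, rate $1$). Translating in space and splitting $\int_0^t = \int_0^{T^u_B} + \int_{T^u_B}^t$, I would bound the tail integral by the uniform $L^2$-translation continuity of $u$ established above, times $\int_0^\infty e^{-k_2 \tau}\, d\tau = 1/k_2$, and bound the initial integral $\int_0^{T^u_B}$ merely by the uniform $L^2$ bound on $u$ times the weight $e^{-k_2(t - T^u_B)}$, which is exponentially small since $t_n \to \infty$; the surviving term $e^{-k_2 t}\rho_0$ is small for the same reason. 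Thus for each $\ve > 0$ one first fixes $N$ so that the initial contributions are $< \ve/2$ for all $n \geq N$ (uniformly in $y$), then chooses $d > 0$ from the $u$-equicontinuity so that the tail contribution is $< \ve/2$ for $|y| < d$; the finitely many remaining indices $n < N$ form a finite subset of $L^2(\gw)$ and are absorbed by shrinking $d$. The constant $u_e$ in \eqref{weq} contributes nothing to translates, so $w$ is handled identically.

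The $v$-component carries the extra difficulty of the quadratic driving term $u^2$. Here I would write $u^2(x+y) - u^2(x) = (u(x+y) - u(x))\,(u(x+y) + u(x))$ and estimate its $L^2$ norm by H\"older as $\|u(\cdot+y) - u(\cdot)\|_{L^4}\, \|u(\cdot+y) + u(\cdot)\|_{L^4}$, which requires translation continuity of $u$ in $L^4$ rather than merely in $L^2$. I would obtain it by interpolating the $L^2$-equicontinuity of Corollary \ref{Cu} against a uniform $L^6$ bound on $u$ for large $t$, the latter coming from the embedding $H^1(\gw) \hookrightarrow L^6(\gw)$ (valid for $n \leq 3$) together with a uniform $H^1$ estimate obtained by the same energy method as before (testing \eqref{ueq} with $-\gd u$ and closing via the $L^4$-dissipativity of Theorem \ref{Tu}). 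Since $\|u(\cdot+y)-u(\cdot)\|_{L^4} \leq \|u(\cdot+y)-u(\cdot)\|_{L^2}^{1/4}\,\|u(\cdot+y)-u(\cdot)\|_{L^6}^{3/4}$, small $L^2$-translates and bounded $L^6$-translates force the $L^4$-translates small; feeding this into the same Duhamel splitting yields the translation condition for $v$.

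Collecting the four components, the sequence $\{S(t_n) z_n\}$ is bounded and satisfies condition (2) of Lemma \ref{KR} in each coordinate, hence is precompact in $E$, and any convergent subsequence witnesses the asymptotic compactness of Definition \ref{Dasmp}. The main obstacle, as flagged in the text, is exactly the absence of spatial regularization in $v, w, \rho$: all spatial equicontinuity must be routed through $u$, and the early-time segment $[0, T^u_B]$—on which solutions launched from mere $L^2$ data are not uniformly equicontinuous in $x$—must be suppressed purely by the exponential memory decay $e^{-(t-s)}, e^{-r(t-s)}, e^{-k_2(t-s)}$ of \eqref{veq}--\eqref{peq} as $t_n \to \infty$. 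The quadratic coupling $u^2$ in \eqref{veq} is the secondary hurdle, since it upgrades the requirement on $u$ from $L^2$- to $L^4$-equicontinuity and thereby forces the uniform higher-integrability bound.
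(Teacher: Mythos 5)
Your overall architecture coincides with the paper's: Kolmogorov--Riesz (Lemma \ref{KR}) applied componentwise, precompactness of the $u$-component taken from Theorem \ref{Tu} and Corollary \ref{Cu}, variation-of-constants formulas for $v, w, \rho$, exponential damping to suppress both the initial data and the early-time segment of the integral, and all spatial equicontinuity routed through $u$. Your treatment of $w$ and $\rho$ is exactly the paper's. The divergence---and the gap---is in the $v$-component. You estimate $\|u^2(\cdot+y)-u^2(\cdot)\|_{L^2}\le\|u(\cdot+y)-u(\cdot)\|_{L^4}\,\|u(\cdot+y)+u(\cdot)\|_{L^4}$ and propose to make the $L^4$-translates small by interpolating the $L^2$-equicontinuity against a \emph{uniform-in-time} $L^6$ bound on $u$, which you claim follows from a uniform $H^1$ estimate obtained "by testing \eqref{ueq} with $-\gd u$ and closing via the $L^4$-dissipativity." That step fails as stated. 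Testing with $-\gd u$ produces the memristive term $k_1\int_\gw \vp(\rho)\,u\,\gd u\,dx$, whose worst part is $\de\int_\gw \rho^2 u\,\gd u\,dx$. To absorb it into $\eta\|\gd u\|^2$ you need $\|\rho^2 u\|_{L^2}$, i.e. control of $\int_\gw \rho^4 u^2\,dx$; but $\rho$ has no spatial regularity whatsoever (equation \eqref{peq} is a pointwise ODE and $\rho_0$ is merely $L^2$), so integrating by parts onto $\nb\rho$ is impossible, and the best integrability available for $\rho$---$L^2$ plus, via Duhamel, an asymptotically $L^4$-bounded part inherited from Theorem \ref{Tu}---is insufficient: $\int_\gw\rho^4u^2\,dx\le\|\rho\|_{L^6}^4\|u\|_{L^6}^2$ requires $\rho\in L^6$, while $\int_\gw\rho^4u^2\,dx\le\|\rho\|_{L^4}^4\|u\|_{L^\infty}^2$ requires $u\in L^\infty$, which $H^1$ does not give for $n=2,3$. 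So the uniform $H^1$ (hence $L^6$) bound you invoke is not obtainable by this route in the stated generality, and nothing else in your argument supplies it.

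The paper avoids this difficulty entirely by never asking for a pointwise-in-time $L^6$ bound. In the estimate \eqref{ky} it applies H\"{o}lder with exponents $(3/2,3)$, producing $\|u(s,\cdot+y)-u(s,\cdot)\|_{L^3}^2\,\|u(s,\cdot)\|_{L^6}^2$ \emph{inside} the exponentially weighted time integral, and then bounds that integral using $\int_{T_B}^t e^{-(t-s)}\|u(s)\|_{L^6}^6\,ds\le L$ from \eqref{EU}, a time-averaged $L^6$ bound that comes for free from the dissipation term $b\int_\gw u^6\,dx$ already present in the $L^4$-energy inequality \eqref{uvq}--\eqref{u4}; no $H^1$ estimate is ever needed. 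Your argument can be repaired by the same move: apply Cauchy--Schwarz in $s$ first, keep the translate factor in $L^3$ (small by the equicontinuity you already have), and pair it with the time-averaged $L^6$ quantity of \eqref{EU} instead of a supremum in time---but that repair is precisely the paper's proof.
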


\begin{proof}
As a setup in this proof, for any given bounded set $B \subset H$, let $T^0_B > 0$ be a finite time such that for any initial state $g_0 \in B$ one has 
\beq \bl{T0B}
	\|g(t; g_0)\|^2 = \|u(t)\|^2 + \|v(t)\|^2 + \|w(t)\|^2 + \|\rho(t)\|^2 \leq K, \quad \text{for} \;\, t > T^0_B ,
\eeq
where the constant $K$ is given in \eqref{K} of Theorem \ref{T2}.

Step 1. First of all, \eqref{ucp} in Corollary \ref{Cu} has shown that the $u$-component of this memristive Hindmarsh-Rose semiflow is ultimately uniform bounded in the space $L^4 (\gw)$, which is compactly imbedded in $L^2 (\gw)$. Hence, according to Definition \ref{Dasmp}, the $u$-component of this semiflow is asymptotically compact in the space $L^2 (\gw)$.  

We now deal with the component functions $v(t, x)$, which is coupled with $u(t, x)$ in the nonlinear differential equation \eqref{veq}. By the variation-of-constant formula, we have the expressions: for $t \geq t_0 \geq 0$,
\begin{equation} \bl{vcw}
	v(t, x) = e^{-t} v(t_0) + \int_{t_0}^t e^{-(t-s)} (\alpha - \beta u^2)\, ds  \leq \alpha + e^{-t} v(t_0) - \beta \int_{t_0}^t e^{-(t-s)} u^2(s, x)\, ds.
\end{equation} 
In view of \eqref{ucp} in Corollary \ref{Cu} and that the embedding $L^4 (\gw) \hookrightarrow L^3 (\gw)$ is compact, according to Lemma \ref{KR}, we can assert that for any $\ve > 0$, there is some $d > 0$ such that, for any given bounded set $B \subset E$ and all $g_0 \in B$, and for $y \in \mathbb{R}^3$ with $| y | < d $, there exists a finite time $T_B \geq T^0_B$ such that 
\beq \bl{utx}
	\int_\gw | u(t, x + y) - u(t, x)|^3 \, dx < \ve^3, \quad \text{for all} \;\, t >T_B.
\eeq
Using the H\"{o}lder inequality we can infer that, for any $t > T_B$ and any $g_0 \in B$,
\begin{equation} \bl{ky}
	\begin{split}
	& \int_\gw |v(t, x+y) - v(t, x)|^2 dx = 2e^{- 2(t - T_B)} \int_\gw |v(T_B, x + y) - v(T_B, x)|^2 \,dx \\
	+ &\, 2\beta^2 \int_\gw \left(\int_{T_B}^t e^{- (t-s)} |u^2(s, x+ y) - u^2 (s, x)| \, ds \right)^2 dx  \\
	\leq &\, 4\, e^{- 2(t-T_B)} \|v (T_B)\|^2    \\
	+ &\, 2 \gb^2 \int_\gw \left(\int^t_{T_B} e^{- (t-s)}\, ds\right) \left(\int^t_{T_B} e^{- (t - s)} |u^2(s, x+ y) - u^2 (s, x)|^2\, ds\right) dx  \\
	\leq &\, 4\, e^{- 2(t-T_B)} \|v (T_B)\|^2    \\
        + &\, 2\beta^2 \int_{T_B}^t e^{- (t-s)} \int_\gw |u(s, x+ y) - u (s, x)|^2 |u(s, x+ y) + u (s, x)|^2 \,dx\, ds    \\
        \leq &\, 4\, e^{-( 2(t-T_B)} \|v (T_B)\|^2  \\
        + &\, 2\beta^2 \int_{T_B}^t e^{- (t-s)} \|(u(s, x+y) - u(s, x))^2\|_{L^{3/2}} \|(u(s, x+y) + u(s,x))^2\|_{L^3}\, ds   \\
        \leq &\, 4\, e^{- 2(t-T_B)} \|v (T_B)\|^2  \\
        + &\, 4 \beta^2 \int_{T_B}^t e^{- (t-s)} \|u(s, x+y) - u(s, x)\|^{2}_{L^3} \left(\|u(s, x+y)\|_{L^6}^{2} + \|u(s, x)\|_{L^6}^{2}\right) ds \\
       \leq &\, 4 e^{- 2(t-T_B)} \|v (T_B)\|^2 + 8 \beta^2 \int_{T_B}^t e^{- (t-s)} \|u(s, x+y) - u(t, x)\|^{2}_{L^3} \|u(s, x)\|_{L^6}^{2}\, ds \\
        \leq &\, 4 e^{- 2(t-T_B)} \|v (T_B)\|^2 + 8 \beta^2 \int_{T_B}^t e^{- (t-s)} \|u(s, x+y) - u(t, x)\|^{2}_{L^3} (\|u(s, x)\|_{L^6}^{6} + 1)\, ds,
        	\end{split}
\end{equation} 
wherein we have used following intermediate steps: $\int^t_{T_B} e^{- (t-s)}\, ds < 1 \, \text{for} \, t > T_B,$ and
\begin{gather*}
	\|(u(s, \cdot + y) - u(s, \cdot))^2\|_{L^{3/2}} = \| u(s, \cdot + y) - u(s, \cdot)\|^2_{L^3},  \\[2pt]
	\|(u(s, \cdot +y) + u(s, \cdot))^2\|_{L^3} \leq \|2u^2 (s, \cdot + y) + 2u^2 (s, \cdot)\|_{L^3} \leq 4 \|u(s, \cdot)\|^2_{L^6}, \\
	\|u(s, \cdot)\|^2_{L^6} \leq \frac{1}{3} \|u(s, \cdot)\|^6_{L^6} + \frac{2}{3} \leq \|u(s, \cdot)\|^6_{L^6} + 1.
\end{gather*}

On the other hand, taking account of \eqref{lsp2}, \eqref{u4} and \eqref{T0B}, we can do the following integration
\begin{equation}  \bl{eu}
	\begin{split} 
	&\int_{T_B}^{t}  e^{-(t - s)} \frac{d}{dt} \|u(s)\|^{4}_{L^{4}} ds + \int^{t}_{T_B} e^{-(t - s)} \int_\gw b u^6(s, x)\, dx \, ds  \\
	\leq &\, \int_{T_B}^{t} 4e^{-(t - s)} | \gw | \left[C_{a,b} + C_{b} (K + J_e^2) + \frac{16\, k_1^3}{b^2}  \left(| c| + \frac{\ga^2}{\de}\right)^3 \right] ds \\
	\leq &\, 4 |\gw | \left[C_{a,b} + C_{b} (K + J_e^2) + \frac{16\, k_1^3}{b^2}  \left(| c| + \frac{\ga^2}{\de}\right)^3 \right], \quad \text{for} \;\, t > T_B \,(\geq T^0_B),
	\end{split}
\end{equation} 
where
$$
	\int_{T_B}^{t}  e^{-(t - s)} \frac{d}{dt} \|u(s)\|^{4}_{L^{4}}\, ds = \|u(t)\|^{4}_{L^{4}} - e^{- (t - T_B)} \|u(T_B)\|^4_{L^4} - \int_{T_B}^{t}  e^{-(t - s)} \|u(s)\|^{4}_{L^{4}}\,ds.
$$
Thus \eqref{eu} with \eqref{Ku} shows that
\begin{equation}  \bl{EU}
	\begin{split} 
	\int^{t}_{T_B} &\, e^{-(t - s)} \int_\gw u^6(s, x)\, dx\, ds \leq \frac{1}{b}\left[ e^{- (t - T_B)} \|u(T_B)\|^4_{L^4} + \int_{T_B}^{t}  e^{-(t - s)} \|u(s)\|^{4}_{L^{4}}\,ds \right] \\
	&+ \frac{4}{b} |\gw | \left[C_{a,b} + C_{b} (K + J_e^2) + \frac{16\, k_1^3}{b^2}  \left(| c| + \frac{\ga^2}{\de}\right)^3 \right]   \\
	&\leq L = \frac{2}{b}\, Q + \frac{4}{b} |\gw | \left[C_{a,b} + C_{b} (K + J_e^2) + \frac{16\, k_1^3}{b^2}  \left(| c| + \frac{\ga^2}{\de}\right)^3 \right], \quad t > T_B.
	\end{split}
\end{equation}
Here $L > 0$ and $K$ in \eqref{K}, $Q$ in \eqref{Q} are uniform constants. 

Now combine \eqref{utx}, \eqref{ky} and \eqref{EU}. Then we can confirm that for any $\ve > 0$ there is a number $d > 0$ such that, for any given bounded set $B \subset E$ and all $g_0 \in B$, and for $y \in \mathbb{R}^3$ with $| y | < d$, it holds that 
\beq \bl{vtx}
	\begin{split}
	&\int_\gw |v(t, x+y) - v(t, x)|^2 dx \\
	\leq &\, 4 e^{- 2(t-T_B)} \|v (T_B)\|^2 + 8\, \beta^2 \int_{T_B}^t e^{-(t-s)} \,\ve^2 \, \left(\int_\gw u(s, s)^6\,dx + 1\right) ds \\
	= &\, 4 e^{- 2(t-T_B )}K + 8\, \beta^2 \ve^2 \left(L + 1\right), \quad \text{for} \; t > T_B, \; g_0 \in B,
	\end{split}
\eeq
Therefore, there exists a time 
$$
	T^v_B = T_B + \frac{1}{2} \ln \left(\frac{\ve^2}{4 K} \right)
$$ 
such that 
\beq \bl{vEp}
	\int_\gw |v(t, x+y) - v(t, x)|^2 dx < \left[ 1 + 8 \beta^2 (L + 1) \right] \ve^2, \quad t > T^v_B, \; \, g_0 \in B.
\eeq
Since $\ve > 0$ is arbitrary, according to Lemma \ref{KR}, here \eqref{vEp} implies that 
\beq \bl{vcp}
	\bigcup_{t\, > T^u_B} \left(\bigcup_{g_0\, \in \,B} v(t, \cdot)\right) \; \text{is precompact in}\; L^2 (\gw).
\eeq

  Step 2.  Next we deal with the solution component $w(t, x)$, which is linearly coupled with the component $u(t, x)$ in \eqref{weq}:
\beq \bl{wq}
	\begin{split}
	w(t, x) &= e^{- rt} w(t_0) + \int_{t_0}^t e^{-r (t-s)} q (u - u_e)\, ds   \\
	&= - \frac{q \,u_e}{r} + e^{- rt} w(t_0) + q \int_{t_0}^t e^{-r (t-s)} u(s, x) ds.
	\end{split}
\eeq
Similarly, by Lemma \ref{KR} and \eqref{ucp}, for any $\ve > 0$, there is some $d > 0$ such that, for any given bounded set $B \subset E$ and all $g_0 \in B$, there is a finite time $T_B \,( \geq T^0_B)$ and for $y \in \mathbb{R}^3$ with $| y | < d$, it holds that as in \eqref{utx}
$$
	\int_\gw | u(t, x + y) - u(t, x)|^3 \, dx < \ve^3, \quad \text{for all} \;\, t > T_B .
$$
Then we can show that, for any $g_0 \in B$,
\begin{equation} \bl{wtx}
	\begin{split}
	&\int_\gw |w(t, x+y) - w(t, x)|^2 dx \leq 4 e^{- 2r (t-T_B)} \|w (T_B )\|^2   \\
	+ &\, \frac{2 q^2}{r} \int_{T_B}^t e^{- r(t-s)} \int_\gw |u(s, x+ y) - u(s, x)|^2 dx\, ds < \left(1 + \frac{2 q^2}{r^2}\right) \ve^2, \quad t > T_B^w,
	\end{split}
\end{equation}
where H\"{o}lder inequality is used for the double integral term in \eqref{wtx} as in the first inequality in \eqref{ky}, and
$$
	T_B^w = T_B + \frac{1}{2r} \ln \left(\frac{\ve^2}{4 K} \right).
$$
Since $\ve > 0$ is arbitrary, Lemma \ref{KR} and \eqref{wtx} confirm that 
\beq \bl{wcp}
	\bigcup_{t\, > T_B^w} \left(\bigcup_{g_0\, \in \,B} w(t, \cdot)\right) \; \text{is precompact in}\; L^2 (\gw).
\eeq

   Lastly we deal with the solution component $\rho (t, x)$, which is linearly coupled with the component $u(t, x)$ in \eqref{peq}:
   
\beq \bl{rq}
	\rho (t, x) = e^{- k_2 t} \rho (t_0) + \int_{t_0}^t e^{-k_2 (t-s)} u(s,x) ds.
\eeq
Again by Lemma \ref{KR} and \eqref{ucp}, for any $\ve > 0$, there is some $d > 0$ such that, for any given bounded set $B \subset E$ and all $g_0 \in B$, there is a finite time $T_B\, ( \geq T^0_B)$ and for $y \in \mathbb{R}^3$ with $| y | < d$, it holds that as in \eqref{utx}
$$
	\int_\gw | u(t, x + y) - u(t, x)|^3 \, dx < \ve^3, \quad \text{for all} \;\, t > T_B .
$$
Then, similar to \eqref{wtx}, for any $g_0 \in B$ we have
\begin{equation} \bl{ptx}
	\begin{split}
	&\int_\gw |\rho (t, x+y) - \rho (t, x)|^2 dx \leq 4 e^{-  2k_2 (t-T_B)} \|\rho (T_B )\|^2   \\
	+ \frac{2}{k_2}  \int_{T_B}^t &\, e^{- k_2 (t-s)} \int_\gw |u(s, x+ y) - u(s, x)|^2 dx\, ds < \left( 1 + \frac{2}{k_2^2}\right) \ve^2, \quad t > T_B^\rho.
	\end{split}
\end{equation}
where 
$$
	T_B^\rho = T_B + \frac{1}{2 k_2} \ln \left(\frac{\ve^2}{4 K} \right).
$$
Consequently, since $\ve > 0$ is arbitrary, \eqref{ptx} confirms that 
\beq \bl{pcp}
	\bigcup_{t >T_B^\rho} \left(\bigcup_{g_0\, \in \,B} \rho (t, \cdot)\right) \; \text{is also precompact in}\; L^2 (\gw).
\eeq

Finally, put together \eqref{ucp}, \eqref{vcp}, \eqref{wcp} and \eqref{pcp}. We conclude that there exists a finite time
$$
	T^*_B = \max \, \{T^u_B, \, T^v_B, \, T^w_B, \, T^\rho_B\}
$$
such that all the solutions $g(t; g_0) = (u(t, \cdot), v(t, \cdot), w(t, \cdot), \rho (t, \cdot))$ started from any given bounded set $B \subset E$ satisfies the asymptotically compact property:
\beq \bl{gcp}
	\bigcup_{t\, > \,T^*_B} \left(\bigcup_{g_0\, \in \,B} g(t; g_0) \right) \;\, \text{is precompact in}\; E.
\eeq
We conclude that the memristive Hindmarsh-Rose semiflow $\{S(t)\}_{t \geq 0}$ generated by \eqref{pb} is asymptotically compact in the state space $E$. The proof is completed. 
\end{proof}

\section{\textbf{Global Attractor Existence and Regularity}}	

In this section we finally achieve the main result on the existence of a global attractor in the state space $E$ for the new proposed neuron model of the diffusive Hindmarsh-Rose equations with memristors. We shall also demonstrate the regularity property of the global attractor in the regular space $\Gamma = H^2 (\gw) \times L^\infty (\gw, \mathbb{R}^3)$. 

\begin{theorem} \bl{main}
	There exits a unique global attractor $\mathscr{A}$ in the state space $E = L^2 (\gw, \mathbb{R}^4)$ for the memristive Hindmarsh-Rose semiflow $\{S(t)\}_{t \geq 0}$ generated by the diffusive Hindmarsh-Rose equations with memristors \eqref{ueq} - \eqref{peq}.
\end{theorem}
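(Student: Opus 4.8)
The plan is to obtain $\ms{A}$ as a direct application of the abstract criterion in Proposition \ref{L:basic}, which reduces the existence of a unique global attractor for the semiflow $\csg$ on the Banach space $E$ to checking exactly two conditions: the existence of a bounded absorbing set, and asymptotic compactness. Both of these have already been established in the previous sections, so the argument here is essentially an assembly of those results.

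First I would note that Theorem \ref{T1}, together with the uniqueness and continuous dependence on initial data, guarantees that $\csg$ is a well-defined semiflow on $E$, so that Proposition \ref{L:basic} is applicable. Condition (i) is then supplied by Theorem \ref{T2}, whose bounded absorbing ball $B_E = \{g \in E : \|g\|^2 \leq K\}$, with $K$ as in \eqref{K}, absorbs every bounded subset of $E$. Condition (ii) is supplied by Theorem \ref{AC}, which established the asymptotic compactness of $\csg$ in $E$ through the Kolmogorov-Riesz characterization of Lemma \ref{KR}. With both hypotheses verified, Proposition \ref{L:basic} yields a unique global attractor $\ms{A} \subset E$ together with the explicit representation
$$
	\ms{A} = \bigcap_{\tau \,\geq\, 0}\; \overline{\bigcup_{t\,\geq\,\tau}\, (S(t) B_E)}.
$$

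There is no genuine obstacle remaining at this stage, since all the analytic difficulty has already been discharged: the uniform state-space dissipativity of Theorem \ref{T2}, the higher-order $L^4$-dissipativity of the $u$-component in Theorem \ref{Tu}, and the delicate spatial-translation estimates \eqref{vtx}, \eqref{wtx}, \eqref{ptx} for the non-regularizing ODE components $v, w, \rho$ in Theorem \ref{AC}. The only point to confirm is bookkeeping, namely that the absorbing time of Theorem \ref{T2} and the asymptotic-compactness time $T^*_B$ of Theorem \ref{AC} are finite for every bounded $B \subset E$ and refer to the same semiflow on the same space $E$, which holds by construction. For the regularity assertion in $\Gamma = H^2(\gw) \times L^\infty(\gw, \mathbb{R}^3)$ anticipated in the section introduction, the plan would be to upgrade the $L^4$-bound on $u$ to an $H^2$-bound via parabolic smoothing for \eqref{ueq} and to derive uniform $L^\infty$-bounds on $v, w, \rho$ from their variation-of-constants representations \eqref{vcw}, \eqref{wq}, \eqref{rq}; since the attractor satisfies $\ms{A} \subset B_E$ and is invariant, these uniform bounds then transfer to $\ms{A}$ itself.
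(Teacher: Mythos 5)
Your proposal is correct and follows exactly the paper's own proof: it verifies the two hypotheses of Proposition \ref{L:basic} via Theorem \ref{T2} (bounded absorbing ball $B_E$) and Theorem \ref{AC} (asymptotic compactness), then concludes with the same representation $\ms{A} = \bigcap_{\tau \geq 0} \overline{\bigcup_{t \geq \tau} (S(t)B_E)}$. The additional remarks on $\Gamma$-regularity are not needed for this theorem (the paper handles that separately in Theorem \ref{reg}), but they do not detract from the argument.
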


\begin{proof}
	Since Theorem \ref{T2} shows that there exists a bounded absorbing set $B_E = \{ g \in E: \| g \|^2 \leq K\}$ and Theorem \ref{AC} shows that the solution semiflow $\{S(t)_{t \geq 0}\}$ generated by the diffusive Hindmarsh-Rose equations with memristors \eqref{pb} is asymptotically compact in the space $E$, the two conditions required in Proposition \ref{L:basic} are satisfied. Therefore, by Proposition \ref{L:basic}, there exists a unique global attractor $\mathscr{A}$ for this semiflow in the space $E$ and this attractor is given by
\beq \bl{GA}
	         \ms{A} = \bigcap_{\tau\, \geq \,0} \; \overline{\bigcup_{t \, \geq \,\tau} \, (S(t) B_E)}.
\eeq
where the closure is take in the space $E$. 
\end{proof}

The following two results provide the regularity information about the global attractor $\ms{A}$ for this memristive and diffusive Hindmarsh-Rose neuron model. 

\begin{lemma} \label{Lcb}
	For space dimension $\dim\, (\gw) = 1$, the $u$-projection of the global attractor $\mathscr{A}$ in \eqref{GA} of the memristive Hindmarsh-Rose semiflow $\{S(t)\}_{t \geq 0}$ is a bounded set in the continuous function space $C (\overline{\gw})$ as well as in $L^\infty (\gw)$.
\end{lemma}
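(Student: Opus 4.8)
The plan is to reduce the assertion to a uniform bound on the $u$-component of $\mathscr{A}$ in $H^1(\gw)$ and then invoke the one-dimensional Sobolev embedding $H^1(\gw) \hookrightarrow C(\overline{\gw})$, which holds (and is even compact) precisely because $\dim(\gw) = 1$; the same embedding delivers the $L^\infty(\gw)$ statement for free. Since $\mathscr{A}$ is invariant and bounded in $E$, it suffices to produce a uniform \emph{absorbing} estimate for $\|\nb u(t)\|^2$ and then apply it to the complete bounded trajectories that foliate $\mathscr{A}$.

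First I would upgrade the dissipativity of the memristive component $\rho$. Taking the $L^2$ inner product $\inpt{\eqref{peq}, \rho^3(t,\cdot)}$ and using the ultimate bound $\int_\gw u^4\,dx \le Q$ from Theorem \ref{Tu} together with Young's inequality \eqref{Yg}, one reaches a differential inequality of the form $\frac{d}{dt}\|\rho\|_{L^4}^4 \le -c\,\|\rho\|_{L^4}^4 + C$, hence a finite time after which $\|\rho(t)\|_{L^4}^4$ is uniformly bounded. This $L^4$-control of $\rho$ is exactly what will let me absorb the quadratic memristive coupling below. The core step is the $H^1$-estimate: take $\inpt{\eqref{ueq}, -\Delta u}$. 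By the Neumann condition \eqref{nbc} the left side produces $\tfrac12\frac{d}{dt}\|\nb u\|^2 + \eta\|\Delta u\|^2$. The decisive simplification is to integrate by parts in the polynomial terms,
\beq
	\int_\gw (au^2 - bu^3)(-\Delta u)\, dx = \int_\gw (2au - 3bu^2)\,|\nb u|^2\, dx \le \frac{a^2}{3b}\,\|\nb u\|^2
\eeq
after completing the square, so the worst nonlinearity is controlled by $\|\nb u\|^2$ with no recourse to any $L^6$ bound. The linear terms $v, w, J_e$ and the term $k_1 c\, u$ are handled by Young's inequality against a fraction of $\eta\|\Delta u\|^2$ plus the $L^2$-bounds $\|v\|^2, \|w\|^2, \|u\|^2 \le K$ from Theorem \ref{T2}.

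For the genuinely memristive terms $k_1 \ga \rho u$ and $k_1 \de \rho^2 u$ I would \emph{not} integrate by parts (that would require $\nb \rho$, which is unavailable); instead Young's inequality yields contributions bounded by $\tfrac{\eta}{8}\|\Delta u\|^2 + C\int_\gw \rho^2 u^2\,dx$ and $\tfrac{\eta}{8}\|\Delta u\|^2 + C\int_\gw \rho^4 u^2\,dx$, and these are closed by Agmon's inequality in one dimension, $\|u\|_{L^\infty}^2 \le C\,\|u\|\,\|u\|_{H^1}$, combined with $\|u\| \le \sqrt{K}$ and the $L^4$-bound on $\rho$ just obtained. The outcome is a differential inequality
\beq
	\frac{d}{dt}\|\nb u\|^2 + \eta\,\|\Delta u\|^2 \le \frac{2a^2}{3b}\,\|\nb u\|^2 + C_4\big(1 + \|\nb u\|\big),
\eeq
and since $\|\nb u\|^2 = -\inpt{u, \Delta u} \le \sqrt{K}\,\|\Delta u\|$ forces $\|\Delta u\|^2 \ge \|\nb u\|^4 / K$, the right side is eventually dominated by the dissipation, yielding a uniform ultimate bound $\|\nb u(t)\|^2 \le R$. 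Alternatively one may invoke the uniform Gronwall lemma, using that $\int_t^{t+1}\|\nb u\|^2\,ds$ is uniformly bounded by integrating \eqref{Gq}.

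Finally, applying this absorbing estimate to the complete trajectory through any $g_0 \in \mathscr{A}$, started arbitrarily far in the past and kept in $\mathscr{A}$ by invariance, shows $\|\nb u\|^2 \le R$ at every point of $\mathscr{A}$, so the $u$-projection of $\mathscr{A}$ is bounded in $H^1(\gw)$ and therefore, by the one-dimensional embedding, bounded in $C(\overline{\gw})$ and in $L^\infty(\gw)$. I expect the main obstacle to be precisely the quadratic memristive term $k_1 \de \rho^2 u\,\Delta u$: because $\rho$ enjoys no spatial smoothing, it must be treated via the $L^4$-upgrade of $\rho$ and the one-dimensional Agmon inequality rather than by integration by parts, and this is the only place, beyond the final embedding, where the restriction $\dim(\gw) = 1$ is genuinely exploited.
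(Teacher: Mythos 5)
Your overall strategy (a uniform $H^1$ bound on the $u$-projection of $\mathscr{A}$, followed by the one-dimensional embedding $H^1(\gw)\hookrightarrow C(\overline{\gw})$) is viable and genuinely different from the paper's, but it contains one genuine gap, and it sits exactly at the point you yourself flag as decisive: the $L^4$ upgrade of $\rho$. The claim that testing \eqref{peq} with $\rho^3$ gives ``a finite time after which $\|\rho(t)\|_{L^4}^4$ is uniformly bounded'' is false for general initial data $g_0\in E$. The $\rho$-equation is an ODE pointwise in $x$ and has no smoothing whatsoever: by the variation-of-constants formula $\rho(t,x)=e^{-k_2 t}\rho_0(x)+\int_0^t e^{-k_2(t-s)}u(s,x)\,ds$, the integral term lies in $L^4(\gw)$ (since $u(s,\cdot)\in L^4(\gw)$ for $s>0$), so if $\rho_0\in L^2(\gw)\setminus L^4(\gw)$ then $\rho(t)\notin L^4(\gw)$ for \emph{every} finite $t$. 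Consequently your differential inequality $\frac{d}{dt}\|\rho\|_{L^4}^4\le -c\|\rho\|_{L^4}^4+C$ can never be initialized, and a uniform-Gronwall variant fails for the same reason, since $\int_t^{t+1}\|\rho(s)\|_{L^4}^4\,ds$ is not known to be finite. Because this $L^4$ control is what closes the memristive term $k_1\de\int_\gw \rho^2 u\,\Delta u\,dx$ via Agmon's inequality, the $H^1$ estimate as written does not go through. The fix: you only need the bound on $\mathscr{A}$, and there it is true, but it must come from the representation formula, not the differential inequality. For $g\in\mathscr{A}$, invariance provides a complete trajectory through $g$ inside $\mathscr{A}$; writing $\rho(t)=e^{-k_2(t-\tau)}\rho(\tau)+\int_\tau^t e^{-k_2(t-s)}u(s,\cdot)\,ds$ with $\|\rho(\tau)\|\le\sqrt{K}$ and letting $\tau\to-\infty$, Minkowski's integral inequality and Fatou together with $\|u(s)\|_{L^4}^4\le Q$ from Theorem \ref{Tu} (valid on $\mathscr{A}$ by invariance) give $\|\rho\|_{L^4}\le Q^{1/4}/k_2$ on $\mathscr{A}$. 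This is exactly the style of argument the paper itself uses in Lemma \ref{vwrh}.

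With that repair, the rest of your proof is correct: the integration by parts giving $\int_\gw(2au-3bu^2)|\nb u|^2\,dx\le \frac{a^2}{3b}\|\nb u\|^2$ is clean (the cubic damping is what saves you there), the Agmon-based treatment of the coupling terms works once $\|\rho\|_{L^4}$ is controlled, and the closure $\|\Delta u\|^2\ge\|\nb u\|^4/K$ does produce superlinear dissipation, so propagating the inequality forward along complete trajectories in $\mathscr{A}$ (starting from a good time supplied by the integral bound \eqref{Gq}) yields a uniform $H^1$ bound at every point of the attractor. You should be aware, though, how much lighter the paper's own proof is: it never differentiates the gradient energy at all. It takes the time-integrated $H^1$ bound \eqref{H1b}, which follows from the basic $L^2$ energy estimate \eqref{Gq} alone, restricts it to $[\frac12,1]$, selects for each $g_0\in\mathscr{A}\subset B_E$ a time $t_0\in[\frac12,1]$ at which $\|u(t_0)\|_{H^1}^2$ is bounded by a fixed constant (a mean-value argument, as in \eqref{ut0}), applies the one-dimensional embedding at that time, and then invokes the invariance $S(1)\mathscr{A}=\mathscr{A}$ together with compactness and continuity to convert the selected-time bound into a bound on all of $\mathscr{A}$. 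The paper's route thus bypasses the memristive coupling entirely --- the term you correctly identify as the main obstacle never appears --- while your route, once fixed, costs more but delivers a stronger byproduct, namely an $H^1$ bound holding at every point of $\mathscr{A}$ rather than only at selected times.
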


\begin{proof}
By Definition \ref{Dgla}, the global attractor $\mathscr{A}$ is an invariant set so that 
\beq \bl{Sta}
	S(t) \mathscr{A} = \mathscr{A} \subset B_E, \quad \text{for all} \;\; t \in [0, \infty). 
\eeq
In view of the inequalities \eqref{H1b} and \eqref{ut0} adapted to the integral over time interval $[\frac{1}{2}, 1]$,  we can assert that for any given $g_0 \in \mathscr{A}$ there is a time point $t_0 \in [\frac{1}{2}, 1]$, which may depend on $g_0$, such that the $u$- component $u(t)$ of the solution $S(t)g_0$ satisfies
\beq \bl{uC}
	\|u(t_0)\|^2_{C(\overline{\gw})} \leq C_{emb} \|u(t_0)\|_{H^1}^2 \leq \frac{C_{emb}}{C_1 \min \{\eta, \gl \}} \left(\max \{C_1, 1\}K + \gl M | \gw|\right),
\eeq
where $C_{emb}$ is a Sobolev embedding constant for $H^1 (\gw) \hookrightarrow C(\overline{\gw})$, under the condition $\dim \, \gw = 1$. Consequently, due to the compactness of $\ms{A}$, the compactness  of the time interval $[\frac{1}{2}, 1]$, and the strong continuity of $u(t)$  in the space $C(\overline{\gw})$ with respect to $t$, the inequality \eqref{uC} infers that there exists a finite positive constant 
\beq \bl{R}
	R\, (\text{being fixed}) \, \geq \frac{C_{emb}}{C_1 \min \{\eta, \gl \}} \left(\max \{C_1, 1\}K + \gl M | \gw|\right)
\eeq 
such that
$$
	\sup_{g_0 \in \mathscr{A}} \|u(1)\|^2_{L^{\infty}} = \sup_{g_0 \in \mathscr{A}} \|u(1)\|^2_{C(\overline{\gw})} \leq R, 
$$
because the invariance of $\mathscr{A}$ in \eqref{Sta} tells us $S(1) \ms{A} = \ms{A}$. Therefore, the $u$-projection of the global attractor $\mathscr{A}$ is a bounded subset inside the ball 
$$
	B_{C} = \{u \in C(\overline{\gw}): \|u\|^2_{C(\overline{\gw})} \leq R\}
$$
in the space $C(\overline{\gw})$ and in $L^{\infty} (\gw)$ as well.
\end{proof}

\begin{lemma} \bl{vwrh}
	The $(v, w, \rho)$-projection of the global attractor $\mathscr{A}$ is a bounded set in the space $L^\infty (\gw, \mathbb{R}^3)$.
\end{lemma}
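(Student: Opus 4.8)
The plan is to exploit two facts established earlier: that $\ms{A}$ is invariant, and that, by Lemma \ref{Lcb}, its $u$-projection is uniformly bounded in $L^\infty(\gw)$. The three remaining components solve, pointwise in $x$, stable linear scalar ODEs driven by this bounded $u$-forcing, so their values are given by convolutions against decaying exponential kernels and are therefore bounded in $L^\infty$. First I would fix an arbitrary $g_0 \in \ms{A}$. Because $\ms{A}$ is invariant, $S(t)\ms{A} = \ms{A}$ for all $t \geq 0$ (see \eqref{Sta}), so $g_0$ extends to a complete trajectory $g(t) = (u(t), v(t), w(t), \rho(t))$, $t \in \mathbb{R}$, which stays inside $\ms{A}$ and hence inside the absorbing ball $B_E$; in particular $\|g(t)\|^2 \leq K$ for all $t \in \mathbb{R}$, so $v, w, \rho$ are bounded in $L^2(\gw)$ along the orbit. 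Moreover, invoking Lemma \ref{Lcb} at each time (together with $S(t)\ms{A} = \ms{A}$) yields the uniform bound $\|u(t)\|_{L^\infty} \leq \sqrt{R}$ for all $t \in \mathbb{R}$, with $R$ from \eqref{R}.

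Next I would derive the backward convolution representation for the slave variables, taking $\rho$ for definiteness. For a.e. fixed $x$ the function $\rho(\cdot, x)$ solves \eqref{peq}, namely $\rho_t = u - k_2 \rho$, whose solution on $[-T, 0]$ splits into the homogeneous part $e^{-k_2 T}\rho(-T, x)$ and the integral $\int_{-T}^0 e^{k_2 s} u(s, x)\,ds$. I would compare this with the candidate $\tilde{\rho}(x) = \int_{-\infty}^0 e^{k_2 s} u(s, x)\,ds$, which is well defined and satisfies $|\tilde{\rho}(x)| \leq \sqrt{R}/k_2$. A direct computation gives $\rho(0, x) - \tilde{\rho}(x) = e^{-k_2 T}\bigl[\rho(-T, x) - \tilde{\rho}(-T, x)\bigr]$, where $\tilde{\rho}(-T, \cdot)$ is again bounded by $\sqrt{R}/k_2$ in $L^\infty$. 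The bracket has $L^2$ norm at most $\sqrt{K} + |\gw|^{1/2}\sqrt{R}/k_2$ uniformly in $T$, so letting $T \to \infty$ forces $\rho(0) = \tilde{\rho}$ a.e., and consequently $\|\rho(0)\|_{L^\infty} \leq \sqrt{R}/k_2$. The identical scheme applies to \eqref{veq} and \eqref{weq}: the stable rates are $1$ and $r$, and the forcing terms $\ap - \gb u^2$ and $q(u - u_e)$ are bounded in $L^\infty$ by $\ap + \gb R$ and $q(\sqrt{R} + |u_e|)$, yielding $\|v(0)\|_{L^\infty} \leq \ap + \gb R$ and $\|w(0)\|_{L^\infty} \leq \tfrac{q}{r}(\sqrt{R} + |u_e|)$. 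Since $g_0 \in \ms{A}$ was arbitrary and the time $0$ may be replaced by any point along the complete orbit, these three bounds are uniform over the entire $(v, w, \rho)$-projection of $\ms{A}$, which is the assertion.

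The main obstacle is the justification of the convolution representation, that is, showing the homogeneous component vanishes. The point is that $e^{-k_2 T}\rho(-T, \cdot)$ is not controlled pointwise, since $\ms{A} \subset B_E$ only supplies an $L^2$ bound on $\rho(-T)$; one therefore cannot simply pass $T \to \infty$ inside an $L^\infty$ estimate. Instead I would run the limit in the $L^2$ norm, where the uniform bound $\|\rho(-T)\|_{L^2} \leq \sqrt{K}$ annihilates the exponentially weighted homogeneous term. This $L^2$-to-$L^\infty$ passage is precisely the step that converts the qualitative invariance of the attractor into the quantitative uniform $L^\infty$ bound, and it is where the boundedness of the \emph{complete} orbit, rather than mere forward boundedness, is indispensable.
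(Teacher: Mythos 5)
Your proof is correct, and at its core it rests on the same two ingredients as the paper's: the variation-of-constants representation of the exponentially stable linear ODEs \eqref{veq}, \eqref{weq}, \eqref{peq} driven by $u$, and the uniform $L^\infty$ bound on the $u$-projection of $\ms{A}$ from Lemma \ref{Lcb} combined with the invariance $S(t)\ms{A}=\ms{A}$. The genuine difference is in how the homogeneous (initial-data) term is disposed of. The paper runs trajectories forward from $g_0\in\ms{A}$, observes the pointwise decay $e^{-t}|v_0(x)|\to 0$, and concludes via invariance that the $(v,w,\rho)$-projection of $\ms{A}$ lies within distance $0$ of the ball of radius $G$ in \eqref{G}; as written this is slightly loose, because once invariance is invoked the "initial datum" being damped depends on $t$ and is controlled only in $L^2$, not pointwise. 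Your version runs backward: you extend $g_0$ to a complete orbit inside $\ms{A}$ (the existence of such an orbit through every point of a compact invariant set is standard, but worth an explicit citation since semiflows need not be backward unique), identify each slave component at time $0$ with the full backward convolution integral, and annihilate the homogeneous term in the $L^2$ norm using $\ms{A}\subset B_E$, so that the identification $\rho(0)=\tilde\rho$ holds a.e.\ and the $L^\infty$ bound follows from the kernel estimate. This is precisely the cleanest way to make the paper's limiting step rigorous, and, as you note, it is where boundedness of the \emph{complete} orbit rather than mere forward boundedness is indispensable. Your constants $\ap+\gb R$, $\frac{q}{r}(\sqrt{R}+|u_e|)$, $\sqrt{R}/k_2$ match the paper's $G$ in \eqref{G}, except that the paper writes $|\ap-\gb R|$ for the $v$-component, which your safer bound $\ap+\gb R$ dominates. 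One inherited caveat: like the paper's proof, yours uses Lemma \ref{Lcb} and therefore carries its restriction $\dim(\gw)=1$, even though the statement of Lemma \ref{vwrh} does not advertise it.
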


\begin{proof}
	The integral version of the differential equations \eqref{vcw}, \eqref{wq} and \eqref{rq} shows that, for any $g_o = (u_0, v_0, w_0, \rho_0) \in \mathscr{A}$, one has 
\beq \bl{vwr}
	\begin{split}
	&\lim_{t \to \infty} \left| v(t, x) - \int_{0}^t e^{-(t-s)} (\alpha - \beta u^2)\, ds \right| \leq \lim_{t \to \infty} e^{-t} |v_0(x)| = 0 ,  \\
	&\lim_{t \to \infty} \left| w(t, x) - \int_{0}^t e^{-r (t-s)} q (u - u_e)\, ds \right| \leq \lim_{t \to \infty}e^{- rt} |w_0(x)| = 0,   \\
	&\lim_{t \to \infty} \left|\rho (t, x) - \int_{0}^t e^{-k_2 (t-s)} u(s,x)\, ds \right| \leq \lim_{t \to \infty} e^{- k_2 t} |\rho_0(x)| = 0.
	\end{split}
\eeq
By Lemma \ref{Lcb}, we see that
\begin{equation*}
	\max \left\{ \left| \int_{t_0}^t e^{-(t-s)} (\alpha - \beta u^2) ds \right|, \left| \int_{t_0}^t e^{-r (t-s)} q (u - u_e) ds \right|, \left| \int_{t_0}^t e^{-k_2 (t-s)} u(s,x) ds \right| \right\} \leq G,
\end{equation*}
where 
\beq \bl{G}
	G = \max \left\{| \ap - \gb R |, \,\, \frac{q}{r} \left(\sqrt{R} + |u_e| \right), \, \frac{1}{k_2} \sqrt{R} \right\}.
\eeq
The compactness and invariance \eqref{Sta} of the global attractor $\ms{A}$ then implies that 
\begin{equation*}
	\lim_{t \to \infty} \dist_{\mathbb{R}^3} \left(\text{Proj}_{(v,w,\rho)} S(t)\mathscr{A}, B_{R^3}(G)\right) = \dist_{\mathbb{R}^3} \left(\text{Proj}_{(v,w,\rho)} \mathscr{A}, B_{R^3} (G)\right) = 0.
\end{equation*}
Here $B_{\mathbb{R}^3}(G)$ is the 3D bounded ball of radius $G$. It means that $\text{Proj}_{(v,w,\rho)} \mathscr{A}$ is a bounded set in the space $L^\infty (\gw, \mathbb{R}^3)$ and the Lemma is proved. 
\end{proof}

\begin{theorem}\label{reg}
	For spatial domain dimension $\dim\, (\gw) = 1$, the global attractor $\mathscr{A}$ in \eqref{GA} of the memristive Hindmarsh-Rose semiflow $\{S(t)\}_{t \geq 0}$ is a bounded set in the regular space $\Gamma = H^2(\gw) \times L^\infty (\gw, \mathbb{R}^3)$.
\end{theorem}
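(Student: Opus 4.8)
The plan is to decouple the two projections. Lemma \ref{vwrh} already establishes that $\text{Proj}_{(v,w,\rho)}\mathscr{A}$ is bounded in $L^\infty(\gw,\mathbb{R}^3)$, so the whole task reduces to showing that the $u$-projection of $\mathscr{A}$ is a bounded set in $H^2(\gw)$. Because $\mathscr{A}$ is invariant, $S(t)\mathscr{A}=\mathscr{A}$ as in \eqref{Sta}, it suffices to produce a single uniform bound $\|u(t)\|_{H^2}\le R_2$ valid along every complete trajectory lying on $\mathscr{A}$.

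First I would cash in the $L^\infty$ regularity already in hand. For $\dim\gw=1$, Lemma \ref{Lcb} and Lemma \ref{vwrh} together give that on $\mathscr{A}$ all four components $u,v,w,\rho$ are uniformly bounded in $L^\infty(\gw)$; since $\varphi(\rho)=c+\ga\rho+\de\rho^2$ is a polynomial in the bounded function $\rho$, the full forcing term
\[
  F := au^2-bu^3+v-w+J_e-k_1\varphi(\rho)u
\]
in \eqref{ueq} is uniformly bounded in $L^\infty(\gw)\subset L^2(\gw)$, say $\|F\|\le C_F$. This is the step that renders the problem tractable: although $v,w,\rho$ carry no spatial smoothing, on the attractor they act only as a bounded $L^2$-forcing for the scalar parabolic equation $u_t=\eta\gd u+F$. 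Next I would test \eqref{ueq} with $-\gd u$. Using the Neumann condition \eqref{nbc} to integrate by parts, $\langle u_t,-\gd u\rangle=\tfrac12\frac{d}{dt}\|\nb u\|^2$, and estimating $\langle F,-\gd u\rangle\le\frac{\eta}{2}\|\gd u\|^2+\frac{1}{2\eta}\|F\|^2$ gives $\frac{d}{dt}\|\nb u\|^2+\eta\|\gd u\|^2\le C_F^2/\eta$. Combining the interpolation $\|\gd u\|^2\ge 2\|\nb u\|^2-\|u\|^2$ (again Neumann integration by parts) with the absorbing bound $\|u\|^2\le K$ from Theorem \ref{T2} turns this into a dissipative inequality, so Gronwall together with the invariance \eqref{Sta} yields a uniform bound on $\|\nb u(t)\|$ over $\mathscr{A}$, and integrating the displayed inequality over $[t,t+1]$ gives the time-averaged control $\int_t^{t+1}\|\gd u(s)\|^2\,ds\le C$.

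The main obstacle is upgrading this $L^2$-in-time control of $\|\gd u\|$ to a pointwise-in-time bound, since one cannot test with $\gd^2 u$: that would generate $\nb F$, hence $\nb v,\nb w,\nb\rho$, which are exactly the uncontrolled quantities. Instead I would differentiate \eqref{ueq} in $t$ (justified since on $\mathscr{A}$ the trajectories are complete strong solutions by Lemma \ref{Lwn}), set $p=u_t$, and test $p_t=\eta\gd p+\partial_t F$ with $p$. Writing $v_t,w_t,\rho_t$ via the ODEs \eqref{veq}--\eqref{peq}, every term of
\[
  \partial_t F=(2au-3bu^2-k_1\varphi(\rho))\,p+(v_t-w_t)-k_1\varphi'(\rho)\rho_t\,u
\]
is, on $\mathscr{A}$, either a uniformly bounded $L^\infty$ coefficient times $p$ or a bounded $L^2$ function, which gives $\frac{d}{dt}\|p\|^2\le C_5\|p\|^2+C_6$. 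The companion $L^2$-in-time bound $\int_t^{t+1}\|u_t\|^2\,ds\le C$, obtained by testing \eqref{ueq} with $u_t$ and invoking the uniform $\|\nb u\|$ bound, then feeds the uniform Gronwall lemma to produce a pointwise bound on $\|u_t(t)\|$ over $\mathscr{A}$.

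Finally, reading the equation as $\eta\gd u=u_t-F$ yields $\|\gd u(t)\|\le\frac{1}{\eta}(\|u_t(t)\|+C_F)$ uniformly in $t$, and elliptic regularity for the Neumann Laplacian, $\|u\|_{H^2}\le C(\|\gd u\|+\|u\|)$, delivers the desired uniform bound on $\|u\|_{H^2}$ along $\mathscr{A}$. Together with Lemma \ref{vwrh} this shows that $\mathscr{A}$ is a bounded subset of $\Gamma=H^2(\gw)\times L^\infty(\gw,\mathbb{R}^3)$, completing the argument.
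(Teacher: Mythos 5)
Your proposal is correct and follows the same overall strategy as the paper's proof: both reduce the problem via Lemma \ref{vwrh} to bounding the $u$-projection in $H^2(\gw)$, both exploit the $L^\infty$ bounds on the attractor supplied by Lemmas \ref{Lcb} and \ref{vwrh}, both obtain a uniform-in-time $L^2$ bound on $u_t$ by differentiating \eqref{ueq} in time, and both finish by reading $\eta\gd u = u_t - F$ off the equation and invoking elliptic regularity for the Neumann Laplacian. The differences are technical but worth recording. First, where you bound $v_t, w_t, \rho_t$ pointwise and algebraically from the ODEs \eqref{veq}--\eqref{peq} (legitimate, since on $\mathscr{A}$ all four components are $L^\infty$-bounded), the paper instead derives integrated-in-time bounds by testing each of the four equations with its own time derivative (its Step 1, culminating in \eqref{p4}); your route makes those three extra estimates unnecessary and is genuinely leaner. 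Second, to convert the averaged bound $\int_t^{t+1}\|u_t\|^2\,ds \le C$ into a pointwise bound, you use the uniform Gronwall lemma, while the paper tests the time-differentiated equation \eqref{Bp} against the weighted function $t^2 u_t$ and integrates over $[0,1]$ (its \eqref{p8}--\eqref{p9}); these are interchangeable devices serving the same purpose. The one place your sketch is terse: the Gronwall argument for the uniform $\|\nb u\|$ bound needs a finite, uniform starting value, and invariance alone does not supply one --- a priori $\sup_{g\in\mathscr{A}}\|\nb u\|$ could be infinite, so the backward-trajectory trick gives no control of $e^{-2\eta t}\|\nb u(0)\|^2$. One must first invoke the integrated smoothing estimate \eqref{H1b}/\eqref{ut0} to locate a time $t_0 \in (0,1)$ at which $\|u(t_0)\|_{H^1}$ is bounded uniformly over $\mathscr{A}\subset B_E$, run Gronwall from $t_0$ to $t=1$, and then use $S(1)\mathscr{A}=\mathscr{A}$. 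Since this is exactly the device the paper itself uses below \eqref{p4}, this is a presentational rather than substantive gap.
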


\begin{proof} 
Consider all the solution trajectories $\{S(t)g_0: g_0 \in \mathscr{A}\}$, which are complete trajectories in terms of $t \in (-\infty, \infty)$ and all inside the global attractor $\mathscr{A}$. In view of Lemma \ref{vwrh}, it suffices to prove that the $u$-projection of the global attractor $\ms{A}$ is in the space $H^2 (\gw)$. The proof goes through three steps.

Step 1. For the first component $u(t, x)$ of any solution trajectory in $\mathscr{A}$, take the $L^2$ inner-product $\inpt{\eqref{ueq}, u_t}$, where $u_t = \frac{\partial u}{\partial t}$, to obtain
\begin{equation} \bl{ut}
	\begin{split}
	&\|u_t\|^2 + \frac{\eta}{2}\frac{\partial}{\partial t}\|\nb u\|^2 = \int_\gw (a u^2 - b u^3 + v - w + J_e - k_1 (c + \ga \rho + \de \rho^2) u_t)\, dx\\
	\leq &\, \int_\gw \left(a R + b R^{3/2} + | v | + | w | + J_e + k_1 | c | + k_1 \ga^2 + k_1 (1 + \de)\rho^2 \right) |u_t|\, dx \\[2pt]
	\leq &\, \left(a R + b R^{3/2} + J_e + k_1 | c | + k_1 \ga^2 \right)^2 |\gw|   \\[5pt]
	&+ \| v\|^2 + \| w\|^2 + k_1^2 (1 + \de)^2 \|\rho^2\|^2 + \frac{3}{4} \|u_t\|^2  \\[5pt]
	\leq &\, \left(a R + b R^{3/2} + J_e + k_1 | c | + k_1 \ga^2 \right)^2 |\gw| + K + k_1^2 (1 + \de)^2 \|\rho\|^4_{L^4} + \frac{3}{4} \|u_t\|^2  \\[4pt]
	\leq &\, \left(a R + b R^{3/2} + J_e + k_1 | c | + k_1 \ga^2 \right)^2 |\gw| + K + k_1^2 (1 + \de)^2 G^4 \,|\gw| + \frac{3}{4} \|u_t\|^2.
	\end{split}	
\end{equation}
The Young's inequality \eqref{Yg} is used in the second inequality of \eqref{ut}.

Take the $L^2$ inner-product $\inpt{\eqref{veq}, v_t}$ for the second component $v(t, x)$ of any trajectory in $\mathscr{A}$. We have
\begin{equation} \bl{vt}
	\begin{split}
	&\|v_t\|^2 = \int_\gw (\alpha - \beta u^2 - v) v_t\, dx\\
	\leq &\, \int_\gw (\alpha + \beta R + | v |) |v_t|\, dx = (\alpha + \beta R)^2\, |\gw | + K + \frac{1}{2}\|v_t\|^2 
	\end{split}
\end{equation}
where the Cauchy inequality and \eqref{lsp2} are used. Similarly take the $L^2$ inner-product $\inpt{\eqref{weq}, w_t}$ for the third component $w(t, x)$ of any trajectory in $\mathscr{A}$. We see 
\begin{equation} \bl{wt}
	\begin{split}
	&\|w_t\|^2 = \int_\gw (q u - q u_e - r w) w_t\, dx.     \\
	\leq &\, \int_\gw (q \sqrt{R} + q |u_e| + r | w |)|w_t|\, dx = (q \sqrt{R} + q |u_e|)^2 \, |\gw | + r^2 K + \frac{1}{2} \|w_t\|^2
	\end{split}
\end{equation}
Next take the $L^2$ inner-product $\inpt{\eqref{peq}, \rho_t}$ for the fourth component $\rho (t, x)$ of any trajectory in $\mathscr{A}$ to get
\beq \bl{rhot}
	\|\rho_t\|^2 = \int_\gw (u - k_2 \rho) \rho_t \, dx \leq R\,|\gw | + k_2^2 K + \frac{1}{2} \|\rho_t \|^2.
\eeq

Now Sum up the above four estimate inequalities. We come up with
\beq \label{p3}
	\begin{split}
	&\frac{1}{4} \left(\|u_t\|^2 + \|v_t\|^2 + \|w_t\|^2 + \|\rho_t \|^2\right)+ \frac{\eta}{2} \frac{\partial}{\partial t} \|\nb u\|^2  \\[3pt]
	\leq &\frac{1}{4}\|u_t\|^2 + \frac{1}{2} \left(\|v_t\|^2 + \|w_t\|^2 + \|\rho_t \|^2\right)+ \frac{\eta}{2} \frac{\partial}{\partial t} \|\nb u\|^2  \\[3pt]
	\leq &\, \left(a R + b R^{3/2} + J_e + k_1 | c | + k_1 \ga^2 \right)^2 |\gw| + K + k_1^2 (1 + \de)^2 G^4 \,|\gw|   \\[8pt]
	+ &\, (\alpha + \beta R)^2\, |\gw| + K + (q \sqrt{R} + q |u_e|)^2\, |\gw | + r^2 K + R\,|\gw | + k_2^2 K,   \\[6pt]
	= &\, \left(\left[a R + b R^{3/2} + J_e + k_1 | c | + k_1 \ga^2 \right]^2 + k_1^2 (1 + \de)^2 G^4 \right)|\gw|     \\[2pt]
	+ &\, \left(2 + r^2 + k_2^2 \right) K +  \left[(\alpha + \beta R)^2 + (q \sqrt{R} + q |u_e|)^2 + R \right] |\gw |,   \quad t > 0.
	\end{split}
\eeq
Integrating the inequality \eqref{p3} over the time interval $[0,1]$, we obtain 
\beq \label{p4}
	\int_{0}^{1} \left(\|u_t (s)\|^2 + \|v_t (s)\|^2 + \|w_t (s)\|^2 + \|\rho_t (s)\|^2\right) ds \leq \Phi ,
\eeq
where the constant 
\begin{equation}. \bl{Phi}
	\begin{split}
	\Phi &= \frac{2\eta}{C_1 \min \{\eta, \gl \}} \left(\, \max \{C_1, 1\} K + \gl M | \gw| \,\right)   \\
	&+ 4\left(\left[a R + b R^{3/2} + J_e + k_1 | c | + k_1 \ga^2 \right]^2 + k_1^2 (1 + \de)^2 G^4 \right)|\gw|     \\
	&+ 4\left(2 + r^2 + k_2^2 \right) K + 4 \left[(\alpha + \beta R)^2 + (q \sqrt{R} + q |u_e|)^2 + R \right] |\gw |.
	\end{split}
\end{equation}	
and in the inequality \eqref{p4} we have used \eqref{ut0} and \eqref{Sta} to bound the gradient term $(\eta/2) \|\nb u_0 \|^2$ for all the trajectories in the global attractor.

Step 2. For the diffusive Hindmarsh-Rose equations with memristors confined on the set of the global attractor $\ms{A}$, we differentiate the equation \eqref{ueq} with respect to time $t$ and get
\begin{equation} \bl{Bp}
	u_{tt} = \eta \gd u_t +  2 a u\, u_t - 3\, b u^2 u_t + v_t - w_t - k_1 \vp (\rho) u_t - k_1 (\ga \rho_t + 2\de \rho \rho_t)u.
\end{equation}
Take the inner product $\inpt{\eqref{Bp}, t^2 u_t}$. Based on the results shown in Lemma \ref{Lcb} and Lemma \ref{vwrh}, it yields 

\begin{equation} \label{p8} 
	\begin{split}
	& \inpt{u_{tt}, t^2 u_t} - \langle \eta \Delta u_t, t^2 u_t \rangle = -t\|u_t\|^2 + \frac{1}{2} \frac{d}{dt} \|t u_t\|^2 + t^2 \eta \|\nb u_t\|^2    \\
	= &\, \int_\gw t^2 (2 a u\, u_t^2 - 3 b u^2 u_t^2 +v_t u_t - w_t u_t - k_1 \vp (\rho) u^2_t - k_1 (\ga \rho_t + 2\de \rho \rho_t)u u_t)\, dx\\
	\leq &\, t^2 \int_\gw \left[2a \sqrt{R} \, u_t^2 + v_t^2 + w_t^2 + 2u_t^2 + k_1 |\vp(G)| u^2_t + k_1 \sqrt{R}\, (|\ga| + \de G)\, (\rho_t^2 + u_t^2) \right] dx \\
	= &\, t^2 \left[2a \sqrt{R} + 2 + k_1 (|\vp(G)| + \sqrt{R}\, (|\ga| + \de G))\right] \|u_t\|^2 + t^2 \left(\|v_t\|^2 + \|w_t\|^2\right) \\[4pt]
	&+ t^2 k_1 \sqrt{R} \, (|\ga| + \de G) \, \| \rho_t \|^2 , \quad t > 0.
	\end{split}
\end{equation}
Here $\vp (G) = c + \ga G + \de G^2$ and the first two terms in the first equality of \eqref{p8} is derived by
\begin{equation*}
	\begin{split}
	&\quad -t \|u_t\|^2 + \frac{1}{2} \frac{d}{dt} \|t u_t\|^2  = -t \|u_t\|^2 + \frac{1}{2} \frac{d}{dt} \inpt{t u_t, t u_t}  \\[3pt]
	& = -t \|u_t\|^2 + \frac{1}{2} \left(\left\langle \frac{\partial}{\partial t} (t u_t), t u_t \right\rangle + \left\langle t u_t, \frac{\partial}{\partial t} (t u_t) \right\rangle \right) \\
	& = -t \|u_t\|^2 + \left\langle \frac{\partial}{\partial t} (t u_t), t u_t \right\rangle = -t \|u_t\|^2 + \inpt{u_t, t u_t} + \inpt{t u_{tt}, t u_t}.  \\[3pt]
	& = -t \|u_t\|^2 + t \|u_t\|^2 + \inpt{u_{tt}, t^2 u_t} = \inpt{u_{tt}, t^2 u_t}.
	\end{split}
\end{equation*}
Now we integrate the differential inequality \eqref{p8} on $[0, t]$ to obtain 
\begin{equation*}
	\begin{split}
	\frac{1}{2} \|t u_t\|^2 &\leq  \int_{0}^{t} s \|u_t(s)\|^2 ds + \int_{0}^{t} s^2 \left(\|v_t\|^2\ + \|w_t\|^2 + k_1 \sqrt{R}\,(|\ga| + \de G)\, \| \rho_t \|^2\right) ds   \\
	&+ \int_{0}^{t} s^2 \left( 2a \sqrt{R} + 2 + k_1 (|\vp(G)| + \sqrt{R}(|\ga| + \de G)) \right) \|u_t(s)\|^2\, ds,  \;\;  t > 0.
	\end{split}
\end{equation*}
In the above inequality we can take $t = 1$ and get
\begin{equation} \label{p9}
	\begin{split}
	\| u_t (1)\|^2 &\leq 2 \int_{0}^{1} \|u_t(s)\|^2\, ds + 2\int_{0}^{1} \left(\|v_t \|^2 + \|w_t \|^2 + k_1 \sqrt{R}\,(|\ga| + \de G)\, \| \rho_t \|^2\right) ds   \\
	&+ 2 \int_{0}^{1} \left( 2a \sqrt{R} + 2 + k_1 (|\vp(G)| + \sqrt{R}\,(|\ga| + \de G)) \right) \|u_t(s)\|^2\, ds \leq D.
	\end{split}
\end{equation}  
By the inequality in \eqref{p4}, here we have the constant
\begin{equation} \bl{D}
	D = 2 \left(3 + 2a \sqrt{R} + k_1 |\vp(G)| + 2k_1 \sqrt{R}\, (|\ga| + \de G)\right) \Phi ,
\end{equation}
where the constant $\Phi$ is given in \eqref{Phi} of Step 1. Due to the dynamic invariance of the global attractor $\ms{A}$,
$$
	S(t + 1) \mathscr{A} = S(t) \mathscr{A} = \mathscr{A}, \quad \text{for any} \;\, t \in [0, \infty), 
$$
actually \eqref{p9} demonstrates that for all the trajectories $\{S(t) g_0\}_{t \geq 0}$ in the global attractor $\ms{A}$, the $u$-component satisfies
\beq \bl{utbd}
	\| u_t (t)\|^2 \leq D,  \quad \text{for any} \;\, t \in [0, \infty). 
\eeq
	
	Step 3. From the original $u$-equation \eqref{ueq}, put together what we have proved in above steps, it holds that
\begin{equation} \label{p10}
	\begin{split}
	&\eta \|\gd u(t)\| \leq \|u_t (t)\| + a \|u^2 (t)\| + b\|u^3 (t) \| + \|v(t)\| + \|w(t)\| + J_e |\gw|^{1/2}   \\[4pt] 
	&+ k_1 \|(c + \ga \rho (t) + \de \rho^2 (t))u(t)\|   \\
        \leq &\, \sqrt{D} + \left(a R + b R^{3/2} + 2G + J_e + k_1 (|c| + |\ga | G + \de G^2)\sqrt{R}\, \right) |\gw |^{1/2}, \;\;  t > 0.
        	\end{split}
\end{equation}
and the constants $R$ in \eqref{R}, $G$ in \eqref{G}, and $D$ in \eqref{D} are all independent of any initial state in the space $E$.
	
	Since the Laplacian operator $\Delta$ with the homogeneous Neumann boundary condition \eqref{nbc} is self-adjoint and negative definite modulo constant functions, the Sobolev space norm of any function $h(x)$ in $H^2 (\gw)$ is equivalent to $\| h \| + \eta \|\gd h \|$. Therefore, the inequality \eqref{p10} together with Theorem \ref{T2} and the fact $S(t) \mathscr{A} = \mathscr{A}$ shows that the $u$-component of the global attractor $\mathscr{A}$ is a bounded set in $H^2 (\gw)$.  A quantitative bound of the equivalent $H^2$-norm is given by
\beq \bl{H2}
	\sup_{g \,\in \,\mathscr{A}} \|u\|_{H^2} \leq \sqrt{K} + \sqrt{D} + \left(a R + b R^{3/2} + 2G + J_e + k_1 (|c| + |\ga | G + \de G^2)\sqrt{R}\right) |\gw |^{1/2}.
\eeq
Combined \eqref{H2} with Lemma \ref{Lcb} and Lemma \ref{vwrh}, we have proved that the global attractor $\ms{A}$ is a bounded set in the regular space $\Gamma = H^2 (\gw) \times L^\infty (\gw, \mathbb{R}^3)$. 
\end{proof}

\textbf{Conclusions}. In this paper, the diffusive Hindmarsh-Rose equations with memristors are proposed as a new mathematical model of neuron dynamics, which is a hybrid system of coupled partial differential equation of the membrane potential for a neuron cell and three ordinary differential equations of the fast and slow ion channels plus a memristive variable featuring the dynamical memory due to the electromagnetic flux effect. The rationality of such a model is at least biologically explicit in view of the long axons of neuron cells in brain and nerve systems. 

Global dynamics for the solution semiflow of this memristive Hindmarsh-Rose system is studied under no conditions of any naturally involved biological and mathematical parameters. The main result is the existence of a unique global attractor for this dynamical system or called semiflow generated by the weak solutions in the basic state space $E = L^2 (\gw, \mathbb{R}^4)$. 

Due to the quadratic nonlinear memductance and its nonlinear coupling with the membrane potential variable in the main partial differential equation, the challenging proofs of dissipativity and asymptotic compactness are carried out through many steps of sophisticated \emph{a priori} uniform estimates and the Kolmogorov-Riesz compactness approach. 

Moreover, the spatial regularity of this global attractor in the space $\Gamma = H^2 (\gw) \times L^\infty (\gw, \mathbb{R}^3)$ is also proved for one-dimensional domain. The quantitative bounds of the region containing this global attractor in the state space $E$ and the region in the regular space $\Gamma$ are explicitly provided, which can be used to facilitate further researches on stable or unstable equilibria patterns, coexisting chimeras, bifurcation and firing patterns, or chaotic local attractors. All the permanent regimes of the modeled neuron dynamics must be included in the global attractor and located in these two regions. 

\bibliographystyle{amsplain}

\end{document}